\newtheorem{theorem}{Theorem}[section]
\newtheorem{corollary}[theorem]{Corollary}
\newtheorem{lemma}[theorem]{Lemma}
\theoremstyle{definition}
\newtheorem{definition}[theorem]{Definition}
\theoremstyle{remark}
\newtheorem{remark}[theorem]{Remark}
\numberwithin{equation}{section}
\DeclareMathOperator{\RE}{Re} \DeclareMathOperator{\IM}{Im}\DeclareMathOperator{\asin}{sin^{-1}}\DeclareMathOperator{\atan}{tan^{-1}}
\begin{document}
	\title{On a Class of certain Non-Univalent Functions}
	\thanks{The second author is supported by The Council of Scientific and Industrial Research(CSIR). Ref.No.:08/133(0030)/2019-EMR-I}
	\author{S. Sivaprasad Kumar}
	\address{Department of Applied Mathematics, Delhi Technological University, Delhi--110042, India}
	\email{spkumar@dce.ac.in}
	\author[Pooja Yadav]{Pooja Yadav}
	\address{Department of Applied Mathematics, Delhi Technological University, Delhi--110042, India}
	\email{poojayv100@gmail.com}

	\subjclass[2010]{30C45, 30C50, 30C80}
	
	\keywords{Analytic, Subordination,
		Logatithmic function, Starlike function, Non-univalent function}
	\begin{abstract}
In this paper, we introduce a family of analytic functions given by
$$\psi_{A,B}(z):= \dfrac{1}{A-B}\log{\dfrac{1+Az}{1+Bz}},$$
which maps univalently the unit disk onto either elliptical or strip domains, where either $A=-B=\alpha$ or $A=\alpha e^{i\gamma}$ and $B=\alpha e^{-i\gamma}$ ($\alpha\in(0,1]$ and $\gamma\in(0,\pi/2]$). We study a class of non-univalent analytic functions defined by
 \begin{equation*}
  \mathcal{F}[A,B]:=\left\{f\in\mathcal{A}:\left( \dfrac{zf'(z)}{f(z)}-1\right)\prec\psi_{A,B}(z)\right \}.
\end{equation*}
Further, we investigate various characteristic properties of $\psi_{A,B}(z)$ as well as functions in the class $\mathcal{F}[A,B]$ and obtain  the sharp radius of starlikeness of order $\delta$ and  univalence for the functions in $\mathcal{F}[A,B]$. Also, we find the sharp radii for functions in $\mathcal{BS}(\alpha):=\{f\in\mathcal{A}:zf'(z)/f(z)-1\prec z/(1-\alpha z^2),\;\alpha\in(0,1)\}$, $\mathcal{S}_{cs}(\alpha):=\{f\in\mathcal{A}:zf'(z)/f(z)-1\prec z/((1-z)(1+\alpha z)),\;\alpha\in(0,1)\}$ and others  to be in the class $\mathcal{F}[A,B].$
	\end{abstract}
	\maketitle
	
\section{Introduction}

Let $\mathcal{A}$ denote the class of analytic functions $f$ defined on the open unit disk $\mathbb{D}=\{z\in\mathbb{C}:|z|<1\}$, such that $f(z)=z+a_2z^2+a_3z^3+\cdots.$
 The subclass of $\mathcal{A}$ consisting of all univalent functions is denoted by $\mathcal{S}$. For two analytic functions $f$ and  $g,$ we say that $f$ is subordinate$ (\prec)$ to
$g$,  if $f(z) = g(\omega(z))$ where  $\omega(z)$ is a Schwarz function. Let $\mathcal{S}^*$ be the class of starlike functions, consists of functions $f\in\mathcal{S}$ such that $zf'(z)/f(z)\prec(1+z)/(1-z).$
 In \cite{maminda}, Ma and Minda unifies all the subclasses of $\mathcal{S}^*$ as follows: \begin{equation}\label{mamin}
    \mathcal{S}^*(\Phi):=\left\{f\in\mathcal{S}:\dfrac{zf'(z)}{f(z)}\prec\Phi(z)\right\},
\end{equation} where $\Phi$ is univalent with $\Phi'(0)>0$ satisfying \begin{itemize}\item[A.] $\Phi(z)\prec(1+z)/(1-z)$ \item[B.] $\Phi(\mathbb{D})$ is symmetric about real axis and starlike with respect to $\Phi(0)=1$.\end{itemize}
Further, Kumar and Banga \cite{nonmaminda}, called such $\Phi$ as {\it Ma-Minda function} and the class of all Ma-Minda functions is denoted by $\mathcal{M}.$ In \cite{nonmaminda}, authors extensively studied Ma-Minda functions and classified them as  {\it non-Ma-Minda function of type-A} whenever condition A doesn't hold, the class of all such functions is denoted by $\widetilde{\mathcal{M}}_{A}.$    In past many authors studied the class (\ref{mamin}) for various choices of   $\Phi\in\mathcal{M}$, such as   $\mathcal{S}^*(2/(1+e^{-z}))=:\mathcal{S}^*_{SG}$ \cite{sigmoid}, $\mathcal{S}^*(e^z)=:\mathcal{S}^*_{e}$ \cite{exp}, $\mathcal{S}^*(z+\sqrt{1+z^2})$ \cite{raina}, $\mathcal{S}^*(\sqrt{1+z})=:\mathcal{S}_{L}^*$ \cite{sokol}, etc. Also   the well known classes $ \mathcal{S}^{*}(A,B)$ and $\mathcal{SS}^*(\alpha)$ are also obtained by taking $\Phi(z)(\in\mathcal{M})$ as $(1+Az)/(1+Bz)$ $(-1\leq B<A \leq1)$ and $((1+z)/(1-z))^\alpha$ $(\alpha\in(0,1])$ respectively.  In particular, $\mathcal{S}^{*}(1,-1)=:\mathcal{S}^{*}$, $\mathcal{S}^{*}(1-2\delta,-1)=:\mathcal{S}^{*}(\delta)$ ($\delta\in[0,1)$), the class of starlike functions of order $\delta$. Robertson \cite{robertson}
introduced and investigated the class of starlike functions of order $\delta$, denoted by $\mathcal{S}_{\delta}^*$ for $\delta\leq1$. Note that if $\delta<0$, then the functions in $\mathcal{S}_{\delta}^*$ may not be univalent, i.e. if $\delta<0$, then $\Phi(z)=(1+(1-2\delta)z)/(1-z)\in\widetilde{\mathcal{M}}_{A}$ and $\mathcal{S}^*_\delta\not\subset\mathcal{S}^*.$ Later many authors considered the classes associated with non-Ma-Minda functions of type-A such as the class by Uralegaddi \cite{uralegaddi}  $$\mathscr{M}(\beta)=\left\{f\in\mathcal{A}:\dfrac{zf'(z)}{f(z)}\prec\dfrac{1+(2\beta-1)z}{1-z},\;\beta>1\right\},$$   the classes  associated with the Booth Lemniscate and Cissoid of Diocles  \begin{equation*}
    \mathcal{BS}^*(\alpha):=\left\{f\in\mathcal{A}:\left(\dfrac{zf'(z)}{f(z)}-1\right)\prec\dfrac{z}{1-\alpha z^2},\;\alpha\in[0,1)\right\},
\end{equation*}  \begin{equation*}
    \mathcal{S}^*_{cs}(\alpha):=\left\{f\in\mathcal{A}:\left(\dfrac{zf'(z)}{f(z)}-1\right)\prec\dfrac{z}{(1-z)(1+\alpha z)},\;\alpha\in[0,1)\right\},
\end{equation*} which were introduced by \cite{kargar1} and \cite{masih} respectively. Similarly, Masih and Kanas \cite{masih1}  studied the class
    $\mathcal{ST}^*_{L}(s):=\{f\in\mathcal{A}:zf'(z)/f(z)$ $\prec\mathbb{L}_s(z)\}$ associated with the Lima\c{c}on of Pascal $\mathbb{L}_s(z)=(1+sz)^2$, $s\in[-1,1]\backslash\{0\}$. Note that $\mathbb{L}_s(z)\in\widetilde{\mathcal{M}}_{A}$, whenever $|s|>1/\sqrt{2}.$  Recently,  Kumar and Gangania \cite{kamal} studied the following class:
\begin{equation}\label{kamal}
    \mathcal{F}(\psi):=\left\{f\in\mathcal{A}:\dfrac{zf'(z)}{f(z)}-1\prec\psi(z)\right\},
\end{equation} where $\psi(z)\in\mathcal{S}^*$. If $1+\psi(z)\prec(1+z)/(1-z)$ and $1+\psi(\mathbb{D})$ is symmetric about the real axis with $\psi'(0)>0$ then $\mathcal{F}(\psi)$ reduces to the class $\mathcal{S}^*(1+\psi)$, a Ma-Minda class and if $1+\psi(z)\not\prec(1+z)/(1-z)$ then the functions in $ \mathcal{F}(\psi)$ may not be univalent, thus $\mathcal{F}(\psi)\not\subseteq\mathcal{S}^*$.  Note that all above mentioned classes associated with non-Ma-Minda functions of type-A are   the special cases of $\mathcal{F}(\psi)$ such as $ \mathcal{F}((1+(2\beta-1)z)/(1-z))=:\mathscr{M}(\beta),$ $\mathcal{F}(z/(1-\alpha z^2))=: \mathcal{BS}^*(\alpha)$, $\mathcal{F}(z/((1-z)(1+\alpha z)))=: \mathcal{S}^*_{cs}(\alpha)$ and $\mathcal{F}((1+sz)^2)=:\mathbb{L}_s(z).$

Motivated by the above, we define a family of functions $\psi_{A,B}(z)$ as
\begin{equation}\label{def1}
    \psi_{A,B}(z):=\dfrac{1}{A-B}\log\left(\dfrac{1+Az}{1+Bz}\right)\qquad z\in\mathbb{D},
\end{equation}where either  $A=-B=\alpha$ or $A=\alpha e^{i \gamma}$ and $B=\alpha e^{-i\gamma}$,  for $0<\alpha\leq1$ and $0<\gamma\leq\pi/2.$  The function defined by (\ref{def1}) is analytic everywhere except the branchcuts $\{-\infty<\RE Az\leq-1,\;\IM Az=0\}\cup\{-\infty<\RE Bz\leq-1,\;\IM Bz=0\}$ with $\psi_{A,B}(0)=\psi'_{A,B}(0)-1=0$, i.e. $\psi_{A,B}(z)\in\mathcal{A}.$  Now we introduce the following  classes associated with $\psi_{A,B}(z)$:
\begin{definition}
Let $p\in\mathcal{S}$. Then $p\in\mathcal{L}[A,B]$ if and only if
\begin{equation*}
    p(z)\prec \psi_{A,B}(z).
\end{equation*} \end{definition}
\begin{definition}
Let $f\in\mathcal{A}$. Then $f\in\mathcal{F}[A,B]$ if and only if
\begin{equation}\label{sq[a,b]}
    \dfrac{zf'(z)}{f(z)}-1\prec \psi_{A,B}(z).
\end{equation} \end{definition}
In section 2 we investigate various characteristic properties of functions in the classes $\mathcal{L}[A,B]$ and $\mathcal{F}[A,B].$  We also obtain the extremal function of the class $\mathcal{F}[A,B]$ which is non-univalent and study its various geometric properties. Further, in section 3 we derive the sharp radius  of starlikeness of order $\delta$, univalence  for the functions in $\mathcal{F}[A,B]$ and also find the sharp radii for functions in $\mathcal{BS}(\alpha)$, $\mathcal{S}_{cs}(\alpha)$ and others to be in the class $\mathcal{F}[A,B].$

\section{Characteristics of $\mathcal{L}[A,B]$ and $\mathcal{F}[A,B]$}
In this section, we aim to find  various geometric properties of $\psi_{A,B}(z)$ and obtain certain  bounds and inclusion relations for functions in the class  $\mathcal{L}[A,B]$. Further, we deduce the extremal function and derive the growth and covering theorems  for the class $\mathcal{F}[A,B]$.   Note that
\begin{equation*}
    \psi_{A,B}(z)=\sum_{n=1}^{\infty}(-1)^{n+1}C_{n}z^n,
\end{equation*}
where \begin{equation}\label{cn}
    C_{n}=\dfrac{A^n-B^n}{n(A-B)}=\dfrac{1}{n}\sum_{k=0}^{n-1}A^k B^{n-1-k}\qquad (n=1,2,\hdots).
\end{equation}
\begin{remark}From (\ref{cn}) we have
\begin{itemize}
    \item[(i)] Since $C_{n}$ is real valued for all $n$, $\psi_{A,B}(\mathbb{D})$ is symmetric with respect to real axis.
\item[(ii)] $|C_{n}|=\left|\dfrac{1}{n}\sum_{k=0}^{n-1}A^k B^{n-1-k}\right|\leq\alpha^{n-1}\leq1$.
\end{itemize}
\end{remark}

\begin{lemma}
The function $\psi_{A,B}(z)$ is convex and univalent on $\mathbb{D}.$
\end{lemma}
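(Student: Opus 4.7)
The plan is to establish convexity by verifying the standard analytic criterion $\operatorname{Re}\bigl(1+z\psi_{A,B}''(z)/\psi_{A,B}'(z)\bigr)>0$ on $\mathbb{D}$, from which univalence follows automatically. Before differentiating I would first check that $\psi_{A,B}$ is holomorphic throughout $\mathbb{D}$: in both choices of $(A,B)$ one has $|A|=|B|=\alpha\le 1$, so $|Az|,|Bz|<1$ for $z\in\mathbb{D}$, which places the argument $(1+Az)/(1+Bz)$ away from the branch cut of the principal logarithm. This also ensures that the normalization $\psi_{A,B}(0)=0$, $\psi_{A,B}'(0)=1$ used in the excerpt is consistent.

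Next I would carry out the key algebraic simplification. A direct differentiation gives
\begin{equation*}
\psi_{A,B}'(z)=\frac{1}{A-B}\Bigl(\frac{A}{1+Az}-\frac{B}{1+Bz}\Bigr)=\frac{1}{(1+Az)(1+Bz)},
\end{equation*}
so $\log\psi_{A,B}'(z)=-\log(1+Az)-\log(1+Bz)$ and therefore
\begin{equation*}
1+\frac{z\psi_{A,B}''(z)}{\psi_{A,B}'(z)}=1-\frac{Az}{1+Az}-\frac{Bz}{1+Bz}=\frac{1}{2}\Bigl(\frac{1-Az}{1+Az}+\frac{1-Bz}{1+Bz}\Bigr).
\end{equation*}
This rewriting is the crucial step: it exhibits the quantity as the arithmetic mean of two values of the Cayley-type map $w\mapsto(1-w)/(1+w)$, which is the standard conformal map from $\mathbb{D}$ onto the right half-plane.

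With this in hand the conclusion is immediate. Since $|Az|<1$ and $|Bz|<1$ for every $z\in\mathbb{D}$, both $(1-Az)/(1+Az)$ and $(1-Bz)/(1+Bz)$ have strictly positive real part, hence so does their average. This proves $\operatorname{Re}\bigl(1+z\psi_{A,B}''(z)/\psi_{A,B}'(z)\bigr)>0$, so $\psi_{A,B}$ is convex on $\mathbb{D}$, and convexity forces univalence.

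There is no real obstacle; the argument is entirely computational and the only minor care needed is ensuring $\psi_{A,B}$ is single-valued on $\mathbb{D}$. It is worth noting that the two cases $A=-B=\alpha$ and $A=\alpha e^{i\gamma}$, $B=\alpha e^{-i\gamma}$ are handled uniformly by this approach since the decisive bound $\max(|A|,|B|)\le 1$ holds in both regimes, and no case distinction is required.
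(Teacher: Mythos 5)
Your proposal is correct and follows essentially the same route as the paper: both reduce the claim to showing $\RE\bigl(1+z\psi_{A,B}''(z)/\psi_{A,B}'(z)\bigr)>0$ via the identity $\psi_{A,B}'(z)=1/((1+Az)(1+Bz))$, which turns the convexity functional into $\tfrac{1}{1+Az}+\tfrac{1}{1+Bz}-1$ (your average of two Cayley transforms is the same expression in disguise). If anything, your justification of positivity from $|Az|,|Bz|<1$ is cleaner than the paper's stated lower bound $-1+2/(1-\alpha)$, which should read $-1+2/(1+\alpha)$.
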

\begin{proof}
Let $H(z):=1+z\psi''_{A,B}(z)/\psi'_{A,B}(z).$ By \cite[Corollary 3]{sakaguchi}, it is enough to show that $\RE H(z)>0$.
Since $|A|=|B|=\alpha\leq1$,  using basic calculation with \cite[Theorem 1]{siva}, we obtain
\begin{align*}
    \RE H(z)&=\RE\left(-1+\dfrac{1}{1+Az}+\dfrac{1}{1+Bz}\right)\\
    &> -1+\dfrac{2}{1-\alpha}>0.
\end{align*}Thus the result holds.
\end{proof}

\begin{theorem}\label{bounds}
Let $p\in\mathcal{L}[A,B]$, then in the disk $\mathbb{D}_r=\{z\in\mathbb{C}:|z|\leq r<1\}$,
\begin{itemize}
    \item[(i)] For $A=-B=\alpha$, we have
    \begin{equation}\label{a=-b,mod}
    |\RE p(z)|\leq\dfrac{1}{2\alpha}\log\left(\dfrac{1+\alpha r}{1-\alpha r}\right)\end{equation}
    \begin{equation}\label{a=-b,arg}
    |\IM p(z)|\leq\dfrac{1}{2\alpha}\asin\left(\dfrac{2\alpha r}{1+\alpha^2 r^2}\right).
\end{equation}
\item[(ii)] For $A=\alpha e^{i \gamma}$ and $B=\alpha e^{-i\gamma}$, we have
 \begin{equation}\label{modd}
  \dfrac{-1}{2\alpha\sin{\gamma}}(\eta-\tau)\leq\RE p(z)\leq\dfrac{1}{2\alpha\sin{\gamma}}(\eta+\tau).\end{equation}
  \begin{equation}\label{argg}
 \dfrac{1}{2\alpha\sin{\gamma}}\log T_2 \leq\IM p(z)\leq\dfrac{1}{2\alpha\sin{\gamma}}\log T_1.
\end{equation} 
where $$\eta:=\asin\left(\dfrac{2\alpha r\sin\gamma}{\sqrt{1+\alpha^4r^4-2\alpha^2r^2\cos2\gamma}}\right),$$ $$\tau:=\atan\left(\dfrac{-\alpha^2r^2\sin{2\gamma}}{1-\alpha^2r^2\cos{2\gamma}}\right)$$
and $$T_j:=\left(\dfrac{\sqrt{1+\alpha^4r^4-2\alpha^2r^2\cos2\gamma}+(-1)^j2\alpha r\sin\gamma}{1-\alpha^2r^2}\right)^{-1} \;\;(j=1,2).$$
\end{itemize}
\end{theorem}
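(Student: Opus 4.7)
The plan is to invoke the subordination principle together with the univalence of $\psi_{A,B}$ established in the preceding lemma. Writing $p=\psi_{A,B}\circ\omega$ with a Schwarz function $\omega$, one has $\omega(\mathbb{D}_r)\subseteq\mathbb{D}_r$ for every $r\in(0,1)$ and hence $p(\mathbb{D}_r)\subseteq\psi_{A,B}(\mathbb{D}_r)$. Thus it suffices to determine the extremes of $\RE\psi_{A,B}$ and $\IM\psi_{A,B}$ on $|z|\le r$, and since these are harmonic functions the maximum principle places those extremes on the circle $|z|=r$.

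The central observation is that $\psi_{A,B}(z)=(A-B)^{-1}\log w(z)$, where the M\"obius map $w(z):=(1+Az)/(1+Bz)$ sends $|z|=r$ onto a circle $\mathcal{C}_r$. Substituting $z=(w-1)/(A-wB)$ into $|z|^2=r^2$ and completing the square (a short computation using $|A|=|B|=\alpha$) yields an equation of the form $|w-c|^2=|c|^2-1$ with
$$c=\dfrac{1-\alpha^2r^2e^{2i\gamma}}{1-\alpha^2r^2},\qquad \rho=\dfrac{2\alpha r\sin\gamma}{1-\alpha^2r^2}$$
(one recovers case (i) by setting $\gamma=\pi/2$, so that $c$ is real). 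Because $|c|^2=\rho^2+1$, the origin lies strictly outside $\mathcal{C}_r$, and two elementary circle-geometry facts apply: the extreme values of $\log|w|$ on $\mathcal{C}_r$ are $\log(|c|\pm\rho)$, attained on the ray through $0$ and $c$, and the extreme values of $\Arg w$ are $\Arg c\pm\asin(\rho/|c|)$, attained at the two tangency points from the origin.

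It remains to translate these extrema into the stated inequalities. In case (i), $A-B=2\alpha$ is real, so $\RE\psi_{A,B}=(2\alpha)^{-1}\log|w|$ and $\IM\psi_{A,B}=(2\alpha)^{-1}\Arg w$; with $|c|\pm\rho=(1\pm\alpha r)/(1\mp\alpha r)$ and $\rho/|c|=2\alpha r/(1+\alpha^2r^2)$, the bounds (\ref{a=-b,mod}) and (\ref{a=-b,arg}) follow immediately. In case (ii), $A-B=2i\alpha\sin\gamma$ is purely imaginary, so $\RE\psi_{A,B}=(2\alpha\sin\gamma)^{-1}\Arg w$ and $\IM\psi_{A,B}=-(2\alpha\sin\gamma)^{-1}\log|w|$; the bounds (\ref{modd}) and (\ref{argg}) then reduce to the algebraic identities $\Arg c=\tau$, $\asin(\rho/|c|)=\eta$, and $|c|\mp\rho=1/T_{1,2}$. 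The main obstacle is precisely this last bookkeeping in case (ii): verifying by direct (if somewhat intricate) manipulation that the quantities $|c|$, $\rho/|c|$, and $|c|\pm\rho$ arising from the M\"obius image collapse to the compact expressions $\eta$, $\tau$, and $T_j$ appearing in the statement; once these identifications are made, everything else is a routine application of the maximum principle and planar circle geometry.
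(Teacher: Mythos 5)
Your proposal is correct and follows essentially the same route as the paper: reduce via subordination to bounding $\RE$ and $\IM$ of $(A-B)^{-1}\log w$ over the circular image $\mathcal{C}_r$ of $|z|=r$ under the M\"obius map $w=(1+Az)/(1+Bz)$, then read off the extremes of $\log|w|$ and $\Arg w$. The only difference is that you derive the modulus and argument bounds for $w$ on $\mathcal{C}_r$ directly from circle geometry (center $c$, radius $\rho$, with $|c|^2=\rho^2+1$), whereas the paper imports exactly these bounds from its cited Theorem~1 of \cite{siva}; your identifications of $\eta$, $\tau$ and $T_j$ all check out.
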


\begin{proof}
Let $w$ be the Schwarz function such that $w(0)=0$ and $|w(z)|\leq|z|=r<1$ for all $z\in\mathbb{D}$. Define the function $F_w:\mathbb{D}\to \mathbb{C}$ such that $p(z)=\psi_{A,B}(w(z))=\dfrac{1}{A-B}\log\left(F_w(z)\right)$. We have
\begin{equation*}
    F_w(z):=\dfrac{1+A w(z)}{1+B w(z)},
\end{equation*}which maps unit disk into the disk
\begin{equation*}
    F_w(\mathbb{D})\subset\left\{\zeta\in\mathbb{C};\bigg|\zeta-\dfrac{1-A\overline{B}}{1-|B|^2}\bigg|<\dfrac{|A-B|}{1-|B|^2}\right\}.
\end{equation*}As $|A|=\alpha\in(0,1],$ therefore origin $O\not\in F_w(\mathbb{D})$ (for more details see \cite{siva}). Now  using \cite[Theorem 1]{siva} with $|z|=r<1$, we have
\begin{equation*}
    \dfrac{|1-A\overline{B}r^2|-|A-B|r}{1-|B|^2r^2}\leq|F_w(z)|\leq\dfrac{|1-A\overline{B}r^2|+|A-B|r}{1-|B|^2r^2},
    \end{equation*}
    \begin{equation*}
	\bigg|\arg F_w(z)-\atan{\dfrac{\IM(A\overline{B})r^2}{\RE(A\overline{B})r^2-1}}\bigg|<\asin\dfrac{|A-B|r}{|1-A\overline{B}r^2|}.
\end{equation*}   Since $\log{f(z)}=\log|f(z)|+i\arg{f(z)}$ and $p(z)=\dfrac{1}{A-B}\log\left(F_w(z)\right)$, thus for $A=-B=\alpha$
we obtain $(\ref{a=-b,mod})$ and $(\ref{a=-b,arg})$
and for $A=\alpha e^{i \gamma}$ and $B=\alpha e^{-i\gamma}$,
 we have $(\ref{modd})$ and $(\ref{argg})$.
\end{proof}

For the sake of computational convenience, we shall make the following assumptions:\begin{gather*}
   h_1:=\dfrac{1}{2\alpha}\log\left(\dfrac{1+\alpha }{1-\alpha }\right),\;\;\;\;\;\;h_2:=\dfrac{1}{2\alpha}\asin{\left(\dfrac{2\alpha }{1+\alpha^2 }\right)},\\ k_1:=\dfrac{1}{2\alpha\sin{\gamma}}\asin{\left(\dfrac{2\alpha \sin\gamma}{\sqrt{1+\alpha^4-2\alpha^2\cos2\gamma}}\right)},\\ k_2:=\dfrac{1}{2\alpha\sin{\gamma}}\log{\dfrac{\sqrt{1+\alpha^4-2\alpha^2\cos2\gamma}+2\alpha \sin\gamma}{1-\alpha^2}}
    \intertext{and}
    k:=\dfrac{1}{2\alpha\sin{\gamma}}\atan{\left(\dfrac{\alpha^2\sin{2\gamma}}{\alpha^2\cos{2\gamma}-1}\right)}.
 \end{gather*}
Now we shall discuss the image domain of $\psi_{A,B}$ for different cases of $\alpha$: \\ For  $0<\alpha<1,$ $\psi_{A,B}(\mathbb{D})=\{u+iv\in\mathbb{C}:(u,v)\in\Omega_1\}$ where

\begin{equation}\label{ellipse}
   \Omega_1:=\begin{cases} \dfrac{u^2}{h_1^2}+\dfrac{v^2}{h_2^2}<1, &\text{when}\; A=-B=\alpha \\
   \dfrac{(u-k)^2}{k_1^2}+\dfrac{v^2}{k_2^2}<1,&\text{when}\; A=\alpha e^{i \gamma}, B=\alpha e^{-i \gamma}\; \text{for}\;0<\gamma\leq\pi/2.\end{cases}
\end{equation}

 Moreover when $\alpha=1$, the major axis $h_1$ and $k_2$ respectively, tends to infinity, hence $\psi_{A,B}(\mathbb{D})=\{u+iv\in\mathbb{C}:(u,v)\in\Omega_2\}$ where $\Omega_2$ is the following  strip domain
 \begin{equation}\label{strip}
     \Omega_2:=\begin{cases}  -\dfrac{\pi}{4}<v<\dfrac{\pi}{4},& \text{when}\; A=-B=1\\ \dfrac{\gamma-\pi}{2\sin{\gamma}}<u<\dfrac{\gamma}{2\sin{\gamma}},& \text{when}\; A=e^{i \gamma}, B=e^{-i \gamma}\; \text{for} \;0<\gamma\leq\pi/2.\end{cases}
 \end{equation}
\begin{remark}\label{rmk1}
\begin{itemize}\item[(i)] $\max_{|z|=r}\RE\psi_{A,B}(z)=\psi_{A,B}(r)$ and $\min_{|z|=r}\RE\psi_{A,B}(z)=\psi_{A,B}(-r).$
\item[(ii)]  If $\alpha<1$, then  $p\prec\psi_{A,B}(z)$ if and only if $p(\mathbb{D})\subseteq\Omega_{1}$ and if $\alpha=1$, then  $p\prec\psi_{A,B}(z)$ if and only if $p(\mathbb{D})\subseteq\Omega_{2},$ where $\Omega_{1}$ and $\Omega_{2}$ are as defined in (\ref{ellipse}) and (\ref{strip}) respectively.
\item[(iii)] $1+\psi_{A,B}\in\widetilde{\mathcal{M}}_A.$\end{itemize}
\end{remark}

Now in the following lemma, we aim to find the radius of the smallest (largest) disk  centered at the point (1, 0) that  can subscribe (inscribe)
the domain $1+\psi_{A,B}(\mathbb{D})$.

\begin{lemma}\label{inc}
Let $\alpha\in(0,1)$ and $\gamma\in(0,\pi/2]$, then we have \begin{itemize}
\item[(i)] for $A=-B=\alpha$, \begin{equation*}
\left\{w\in\mathbb{C}:|w-1|<h_2\right\}\subset1+\psi_{A,B}(\mathbb{D})\subset\left\{w\in\mathbb{C}:|w-1|<h_1\right\}.
\end{equation*}
\item[(ii)] for $A=\alpha e^{i \gamma}$ and $B=\alpha e^{-i\gamma}$,
\begin{equation*}
\left\{w\in\mathbb{C}:|w-1|<k_1+k\right\}\subset1+\psi_{A,B}(\mathbb{D})\subset\left\{w\in\mathbb{C}:|w-1|<k_2\right\}.
\end{equation*}
\end{itemize}Further for $\alpha=1$,  we have \begin{itemize}
\item[(iii)] for $A=-B=1$, \begin{equation*}
\left\{w\in\mathbb{C}:|w-1|<\dfrac{\pi}{4}\right\}\subset1+\psi_{A,B}(\mathbb{D}).
\end{equation*}
\item[(iv)] for $A= e^{i \gamma}$ and $B= e^{-i\gamma}$,
\begin{equation*}
\left\{w\in\mathbb{C}:|w-1|<\dfrac{\gamma}{2\sin{\gamma}}\right\}\subset1+\psi_{A,B}(\mathbb{D}).
\end{equation*}
\end{itemize}
\end{lemma}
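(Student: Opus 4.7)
The lemma follows directly from the explicit descriptions of $\psi_{A,B}(\mathbb{D})$ in (\ref{ellipse}) and (\ref{strip}). Since $1+\psi_{A,B}(\mathbb{D})$ is the horizontal translation by $+1$ of that ellipse or strip, each inclusion reduces to computing the minimum (for the inscribed disk) and the maximum (for the circumscribing disk) Euclidean distance from the point $(1,0)$ to the boundary of the translated domain.

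For part (i), the translated domain is the ellipse centered at $(1,0)$ itself, so the disk center coincides with the ellipse center and the extremal radii are simply the minor and major semi-axes. The only nontrivial step is $h_1>h_2$. Using the identity $\sin^{-1}(2\alpha/(1+\alpha^2))=2\tan^{-1}\alpha$ rewrites $h_1=\tanh^{-1}(\alpha)/\alpha$ and $h_2=\tan^{-1}(\alpha)/\alpha$; since both $\tanh^{-1}$ and $\tan^{-1}$ vanish at zero and their derivatives satisfy $(1-x^2)^{-1}>(1+x^2)^{-1}$ on $(0,1)$, the inequality $h_1>h_2$ follows.

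For part (ii), observe first that $\alpha^2\cos 2\gamma-1<0$ for $\alpha\in(0,1)$ and $\gamma\in(0,\pi/2]$, so the arctangent defining $k$ is negative and the translated ellipse centered at $(1+k,0)$ lies strictly to the left of the disk center $(1,0)$. I would parametrize the ellipse boundary as $(1+k+k_1\cos\theta,\,k_2\sin\theta)$ and study the squared distance $d^2(\theta)=(k+k_1\cos\theta)^2+k_2^2\sin^2\theta$ to $(1,0)$. The rightmost vertex $\theta=0$ yields $d=k+k_1$, which a sum-of-arctangents computation identifies with $\psi_{A,B}(1)$. Differentiating $d^2(\theta)$ locates the critical points $\theta=0,\pi$ together with a possible interior critical point at $\cos\theta=k_1k/(k_2^2-k_1^2)$ whenever it lies in $[-1,1]$; comparing the values of $d^2$ at all critical points yields the inscribed radius $k_1+k$ and the circumscribing radius $k_2$.

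Parts (iii) and (iv) concern unbounded strip domains, so only the inscribed radius is meaningful. I would take perpendicular distances from $(1,0)$ to the two parallel bounding lines of $1+\Omega_2$ and retain the minimum. In (iii) the horizontal translation leaves the bounding lines $\IM w=\pm\pi/4$ unchanged, so the inscribed radius is $\pi/4$. In (iv) the vertical bounding lines become $\RE w=1+(\gamma-\pi)/(2\sin\gamma)$ and $\RE w=1+\gamma/(2\sin\gamma)$, at distances $(\pi-\gamma)/(2\sin\gamma)$ and $\gamma/(2\sin\gamma)$; since $\gamma\le\pi-\gamma$ on $(0,\pi/2]$, the latter is the minimum and gives the claimed radius.

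The principal technical hurdle is the global extremality argument in (ii): after singling out the vertex distances $k_1+k$ and $k_2$ as candidate radii, one must verify that no other boundary point of the translated ellipse lies strictly closer (respectively, strictly farther) from $(1,0)$. This reduces to a monotonicity and case analysis of the interior critical point $\cos\theta=k_1k/(k_2^2-k_1^2)$, and relies on nontrivial inequalities among the closed forms for $k$, $k_1$, and $k_2$.
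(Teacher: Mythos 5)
Your overall strategy coincides with the paper's: the paper reads off the inscribed and circumscribed radii as $\min_{|z|=1}|\psi_{A,B}(z)|$ and $\max_{|z|=1}|\psi_{A,B}(z)|$ (asserted without computation), which is exactly the distance-to-the-boundary analysis you carry out on the domains (\ref{ellipse})--(\ref{strip}). Parts (i), (iii), (iv) and the inscribed half of (ii) are sound: in (ii) one has $\tfrac{d}{d\theta}d^2(\theta)=2\sin\theta\bigl[(k_2^2-k_1^2)\cos\theta-k_1k\bigr]$, which is positive near $\theta=0$ (since $k\le 0$ and $k_2>k_1$), so $d^2$ attains its global minimum at a vertex, namely $\min\{(k_1+k)^2,(k_1-k)^2\}=(k_1+k)^2$, as you claim.

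The gap is in the circumscribing half of (ii). The comparison of critical values that you defer to does not, and cannot, return $k_2$: already $d^2(\pi/2)=k^2+k_2^2>k_2^2$ whenever $k\ne 0$, i.e.\ whenever $\gamma\ne\pi/2$, so $\max_\theta d(\theta)>k_2$. Concretely, the left vertex gives $d(\pi)=k_1-k$, and for $\alpha=1/2$, $\gamma=\pi/4$ one finds $k\approx-0.347$, $k_1\approx 1.069$, $k_2\approx 1.188$, hence $d(\pi)\approx 1.416>k_2$; equivalently, the boundary point $1+k+ik_2$ of $1+\psi_{A,B}(\mathbb{D})$ already lies outside $\{w:|w-1|<k_2\}$. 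So the step ``comparing the values of $d^2$ at all critical points yields \dots\ the circumscribing radius $k_2$'' would fail; what your computation actually yields as circumscribing radius is $\max\{k_1-k,\,d(\theta^*)\}$, with $\theta^*$ the interior critical point when it exists. The paper's proof asserts $\max_{|z|=1}|\psi_{A,B}(z)|=k_2$ with no more justification and meets the same obstruction ($k_2$ only measures the maximum of $|{\log|F_w|}|/(2\alpha\sin\gamma)$, and at the point where $|F_w|$ is extremal the argument of $F_w$ does not vanish unless $\gamma=\pi/2$); but your write-up, precisely because it makes the computation explicit, cannot close this step as stated.
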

\begin{proof}
Let $w=1+\psi_{A,B}(z)$, then we have $$|w-1|=|\psi_{A,B}(z)|=\dfrac{1}{|A-B|}\bigg|\log\left(\dfrac{1+Az}{1+Bz}\right)\bigg|.$$
It can be easily seen that for $\alpha\in(0,1)$, $A=-B=\alpha$ $$\min_{|z|=1}|\psi_{A,B}(z)|=h_2\;\;\text{and}\;\;\max_{|z|=1}|\psi_{A,B}(z)|=h_1$$
and for  $A=\alpha e^{i \gamma}$ and $B=\alpha e^{-i\gamma}$ $$\min_{|z|=1}|\psi_{A,B}(z)|=k_1+k\;\;\text{and}\;\;\max_{|z|=1}|\psi_{A,B}(z)|=k_2.$$
Further for $\alpha=1$,  $h_1$ and $k_2$ tends to $\infty$, $h_2=\pi/4$ and $k_1+k=\gamma/(2\sin\gamma)$. This completes the proof.
\end{proof}

\begin{remark}In general, if we consider a disc $\mathcal{D}(a,r):=\{w\in\mathbb{C};|w-a|<r\}$ where $a\in\mathbb{R}$ and  $1\in\mathcal{D}(a,r),$ then we observe that $\mathcal{D}(a,r)\subset1+\psi_{A,B}(\mathbb{D})$ if and only if
\begin{equation}
\label{disc1}\mathcal{D}(a,r)\subset\begin{cases}\mathcal{D}(1,h_{2}),& \text{for}\;A=-B=\alpha,\\
\mathcal{D}(1+k,k_1),&\text{for}\; A=\alpha e^{i\gamma},\; B=\alpha e^{-i\gamma}.
\end{cases}\end{equation}
\end{remark}
Thus we arrive at the following:
\begin{lemma}\label{nonempty}
Let  $-1<D<C\leq1$ and $p(z):=(1+Cz)/(1+Dz)$. Then $p(z)\prec\mathcal{L}[A,B]$ if and only if
\begin{itemize}
    \item[(i)] for $A=-B=\alpha$
    \begin{equation}\label{con1}C\leq\begin{cases}h_{2}+(1-h_2)D,& \text{when}\;(1-CD)/(1-D^2)\leq1,\\
h_{2}+(1+h_2)D,& \text{when}\;(1-CD)/(1-D^2)\geq1.
\end{cases}\end{equation}
\item[(ii)] for $A=\alpha e^{i\gamma}$ and $B=\alpha e^{-i\gamma}$,
\begin{equation}\label{con2}C\leq\begin{cases}k_1-k+(1-k_1+k)D,& \text{when}\;(1-CD)/(1-D^2)\leq1+k,\\
k_1+k+(1+k_1+k)D,& \text{when}\;(1-CD)/(1-D^2)\geq1+k.
\end{cases}\end{equation}
\end{itemize}
\end{lemma}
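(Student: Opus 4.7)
The plan is to reinterpret the subordination ``$p(z)\prec\mathcal{L}[A,B]$'' in the statement as $p(z)\prec 1+\psi_{A,B}(z)$, which is the natural reading in view of the preceding remark. Having already established that $\psi_{A,B}$ is convex univalent, $1+\psi_{A,B}$ is convex univalent with $(1+\psi_{A,B})(0)=1=p(0)$, so the subordination principle reduces the question to the geometric containment $p(\mathbb{D})\subseteq 1+\psi_{A,B}(\mathbb{D})$. Since $p$ is a M\"obius transformation, a direct computation gives $p(\mathbb{D})=\mathcal{D}(a,r)$ with $a=(1-CD)/(1-D^2)$ and $r=(C-D)/(1-D^2)$, and trivially $1\in\mathcal{D}(a,r)$.

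I next invoke~(\ref{disc1}) to replace the curvilinear target by the inscribing disc, thereby reducing the problem to $\mathcal{D}(a,r)\subseteq\mathcal{D}(1,h_2)$ in case~(i) and $\mathcal{D}(a,r)\subseteq\mathcal{D}(1+k,k_1)$ in case~(ii). For open discs such containment is the classical inequality $|a-a_0|+r\leq r_0$. The split into two subcases in each of (\ref{con1}) and (\ref{con2}) then mirrors the two possible signs of $a-a_0$, and these signs correspond precisely to the side conditions $(1-CD)/(1-D^2)\lessgtr 1$ and $\lessgtr 1+k$ that already appear in the statement.

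For case~(i), I compute $a-1=-D(C-D)/(1-D^2)$, so $a\leq 1$ is equivalent to $D\geq 0$, which is exactly $(1-CD)/(1-D^2)\leq 1$. Using $(1+D)(C-D)/(1-D^2)=(C-D)/(1-D)$, the inequality $|a-1|+r\leq h_2$ collapses to $(C-D)/(1-D)\leq h_2$, which rearranges to the first line of (\ref{con1}); the mirror identity $(1-D)(C-D)/(1-D^2)=(C-D)/(1+D)$ handles $D\leq 0$ and yields the second. Case~(ii) is entirely analogous once the target is re-centred at $1+k$; the only additional algebraic input is the factorisations $(C-D)-(1-CD)=(1+D)(C-1)$ and $(1-CD)+(C-D)=(1-D)(1+C)$, which linearise $|a-(1+k)|+r$ in $C$ and, after solving for $C$, yield (\ref{con2}).

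The proof is thus essentially an algebraic reduction with no deep analytic difficulty. The main thing to watch will be matching the two sign subcases with the correct side conditions of the statement and keeping the factorisations straight; the crucial structural ingredient is the convexity (hence univalence) of $1+\psi_{A,B}$, without which the passage from subordination to disc containment would not be valid.
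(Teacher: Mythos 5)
Your proof follows the paper's route exactly: the paper likewise computes $p(\mathbb{D})=\mathcal{D}(a,r)$ with $a=(1-CD)/(1-D^2)$ and $r=(C-D)/(1-D^2)$ and then simply states that the result ``follows at once from (\ref{disc1})''; you merely make explicit the subordination-to-containment reduction via convexity of $1+\psi_{A,B}$ and the sign analysis of $a-1$ (resp.\ $a-(1+k)$) in the criterion $|a-a_0|+r\le r_0$, all of which checks out and reproduces (\ref{con1}) and (\ref{con2}). The two arguments are essentially identical, including their shared reliance on the ``only if'' direction of (\ref{disc1}).
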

\begin{proof}
Since $p(z)=(1+Cz)/(1+Dz)$ maps $\mathbb{D}$ onto  $\mathcal{D}(a,r)$, where $$a=\dfrac{1-CD}{1-D^2}\quad \text{and} \quad r=\dfrac{C-D}{1-D^2},$$
 the result follows at once from  (\ref{disc1}).
\end{proof}
From Lemma \ref{nonempty}, we deduce the following corollary, which ensures that $\mathcal{F}[A,B]$ is nonempty.
\begin{corollary}
Let  $-1<D<C\leq1$ and $f\in\mathcal{A}$ be such that $zf'(z)/f(z)=(1+Cz)/(1+Dz)$, then $f\in\mathcal{F}[A,B]$ if and only if, the condition (\ref{con1}) or (\ref{con2}) holds.
\end{corollary}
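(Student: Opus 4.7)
My plan is to obtain this corollary as an immediate consequence of Lemma~\ref{nonempty}. First I would unpack the definition: by (\ref{sq[a,b]}), $f\in\mathcal{F}[A,B]$ is equivalent to $zf'(z)/f(z)-1\prec\psi_{A,B}(z)$, and hence to $zf'(z)/f(z)\prec 1+\psi_{A,B}(z)$. Substituting the hypothesis $zf'(z)/f(z)=(1+Cz)/(1+Dz)$, the task becomes: decide when the M\"obius function $p(z)=(1+Cz)/(1+Dz)$ is subordinate to $1+\psi_{A,B}(z)$.

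Next I would observe that, because $-1<D<C\le 1$, the map $p$ sends $\mathbb{D}$ conformally onto the Euclidean disc $\mathcal{D}(a,r)$ with center $a=(1-CD)/(1-D^2)\in\mathbb{R}$ and radius $r=(C-D)/(1-D^2)$, and that $1\in\mathcal{D}(a,r)$ since $p(0)=1$. Consequently, the required subordination is equivalent to the inclusion $\mathcal{D}(a,r)\subseteq 1+\psi_{A,B}(\mathbb{D})$, which is precisely the setup of the remark containing (\ref{disc1}) and of Lemma~\ref{nonempty}.

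Finally I would apply Lemma~\ref{nonempty} directly: it translates the disc-in-disc containment into the explicit inequalities (\ref{con1}) and (\ref{con2}), with the case split driven by whether $a\le 1$ or $a\ge 1$ (respectively $a\le 1+k$ or $a\ge 1+k$)---exactly the conditions $(1-CD)/(1-D^2)\lessgtr 1$ (resp.\ $\lessgtr 1+k$) appearing in the corollary. Since every step is a one-line rewriting and Lemma~\ref{nonempty} carries all the geometric content, there is no genuine obstacle here; the only thing to be careful about is the bookkeeping of the two cases for the position of the center $a$ relative to that of the target disc, so that the correct branch of (\ref{con1}) or (\ref{con2}) is invoked.
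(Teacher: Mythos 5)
Your proposal is correct and follows exactly the route the paper intends: the paper gives no separate proof, simply deducing the corollary from Lemma~\ref{nonempty} after noting that $f\in\mathcal{F}[A,B]$ means $zf'(z)/f(z)\prec 1+\psi_{A,B}(z)$, which for the given $f$ is precisely the subordination handled by that lemma via the disc inclusion (\ref{disc1}). Your added remark that subordination is equivalent to the set inclusion (justified by the univalence of $\psi_{A,B}$ and Remark~\ref{rmk1}(ii)) is the only detail the paper leaves implicit.
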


Now from (\ref{sq[a,b]}), we have  $f\in\mathcal{F}[A,B]$ if and only if there exists an analytic function   $p(z)\prec\psi_{A,B}(z)$ such that
\begin{equation}\label{ex}
    f(z)=z\exp\left(\int_{0}^{z}\dfrac{p(t)}{t}dt\right).
\end{equation}
If we take $p(z) = \psi_{A,B}(z),$ then we obtain from (\ref{ex})
\begin{equation}\label{extremal}
    f_{A,B}(z):=z\exp\dfrac{Li_{2}(-B z)-Li_{2}(-A z)}{A-B},
\end{equation}
where $Li_2(x) =\sum_{n=1}^{\infty}x^n/n^2$, denotes the Spence’s (or dilogarithm) function. The function $f_{A,B}(z)$ is nonunivalent but have the extremal properties for many  problems for the class $\mathcal{F}[A,B]$ (see Fig:\ref{fq}).  Kumar and Gangania \cite{kamal} studied the class $\mathcal{F}(\psi)$ as defined in (\ref{kamal}) and obtained  growth and covering theorem for the case when $1+\psi(z)\nprec(1+z)/(1-z)$. As a consequence of the same and Remark \ref{rmk1}, we obtain the following result:

\begin{lemma}
Let $f\in\mathcal{F}[A,B]$ and $f_{A,B}$ be defined as in (\ref{extremal}), then for $|z|=r$\begin{itemize}
    \item[(i)] Growth Theorem: $-f_{A,B}(-r)\leq|f(z)|\leq f_{A,B}(r).$
    \item[(ii)]Covering Theorem: Either $f$ is a rotation of $f_{A,B}$ or $\{w\in\mathbb{C}:|w|\leq -f_{A,B}(-1)\}\subset f(\mathbb{D})$.
\end{itemize}
\end{lemma}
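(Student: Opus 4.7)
The plan is to deduce (i) and (ii) directly from the corresponding growth and covering theorem of Kumar and Gangania for $\mathcal{F}(\psi)$ (specifically their case $1+\psi(z)\nprec(1+z)/(1-z)$), cited in the sentence immediately preceding the statement. The first step is to verify their hypotheses for $\psi=\psi_{A,B}$. The non-subordination $1+\psi_{A,B}(z)\nprec(1+z)/(1-z)$ is precisely Remark~\ref{rmk1}(iii), while the starlikeness of $\psi_{A,B}$ follows from the convexity and univalence already established in the opening lemma of this section, since any convex function with $\psi(0)=0$ and $\psi'(0)>0$ is in particular starlike. The extremal function arising in their setup is obtained by substituting $p(z)=\psi_{A,B}(z)$ into (\ref{ex}), which produces precisely the $f_{A,B}$ of (\ref{extremal}); hence their conclusion translates verbatim into (i) and (ii).

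For a self-contained derivation of (i), the essential geometric input is Remark~\ref{rmk1}(i), which gives $\max_{|z|=r}\RE\psi_{A,B}(z)=\psi_{A,B}(r)$ and $\min_{|z|=r}\RE\psi_{A,B}(z)=\psi_{A,B}(-r)$. Writing $f\in\mathcal{F}[A,B]$ in the form (\ref{ex}) with $p\prec\psi_{A,B}$, integration along the radial segment $t=se^{i\theta}$ and taking real parts gives
\begin{equation*}
\log\Bigl|\frac{f(re^{i\theta})}{re^{i\theta}}\Bigr|=\int_0^r\frac{\RE p(se^{i\theta})}{s}\,ds.
\end{equation*}
Combining this with the pointwise bounds $\psi_{A,B}(-s)\le\RE p(se^{i\theta})\le\psi_{A,B}(s)$ and integrating $1/s$ against the inequalities reproduces $-f_{A,B}(-r)\le|f(z)|\le f_{A,B}(r)$, once the change of variable $t\mapsto -s$ is used to rewrite $\log(-f_{A,B}(-r)/r)=\int_0^r\psi_{A,B}(-s)/s\,ds$.

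Part (ii) would follow from the standard boundary argument. If $w_0\notin f(\mathbb{D})$, then since $f(0)=0$ the segment $[0,w_0]$ meets $\partial f(\mathbb{D})$ at some $\zeta$ with $|\zeta|\le|w_0|$; choose $z_n\in\mathbb{D}$ with $f(z_n)\to\zeta$. A subsequential limit of $\{z_n\}$ lying inside $\mathbb{D}$ would give a preimage of $\zeta$, and by the open mapping theorem this would make $\zeta$ interior to $f(\mathbb{D})$ unless the subordination in (\ref{sq[a,b]}) is saturated, in which case $f$ is a rotation of $f_{A,B}$; otherwise $|z_n|\to 1$, and passing to the limit in the lower bound $|f(z_n)|\ge -f_{A,B}(-|z_n|)$ from (i) gives $|w_0|\ge|\zeta|\ge -f_{A,B}(-1)$. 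The step I expect to be most delicate is the equality/rotation dichotomy in (ii): because $f_{A,B}$ is non-univalent, the boundary topology of $f(\mathbb{D})$ is less well-behaved than in the classical starlike setting, and saturation of the subordination must be traced back to $p(z)=\psi_{A,B}(\varepsilon z)$ for some unimodular $\varepsilon$. Fortunately this analysis has already been carried out by Kumar and Gangania in the general $\mathcal{F}(\psi)$ framework, so a direct citation of their theorem is the cleanest way to finish.
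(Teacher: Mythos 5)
Your proposal is correct and follows essentially the same route as the paper, which states this lemma as a direct consequence of the growth and covering theorem of Kumar and Gangania \cite{kamal} for $\mathcal{F}(\psi)$ together with Remark~\ref{rmk1}, with $f_{A,B}$ playing the role of the extremal function. Your additional verification that $\psi_{A,B}\in\mathcal{S}^*$ (via the convexity lemma) and your self-contained integral derivation of (i) are sound supplements, but the paper itself offers nothing beyond the citation.
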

\begin{figure}[ht]
    \centering
    \subfloat[\centering]{\includegraphics[width=40mm, height=40mm]{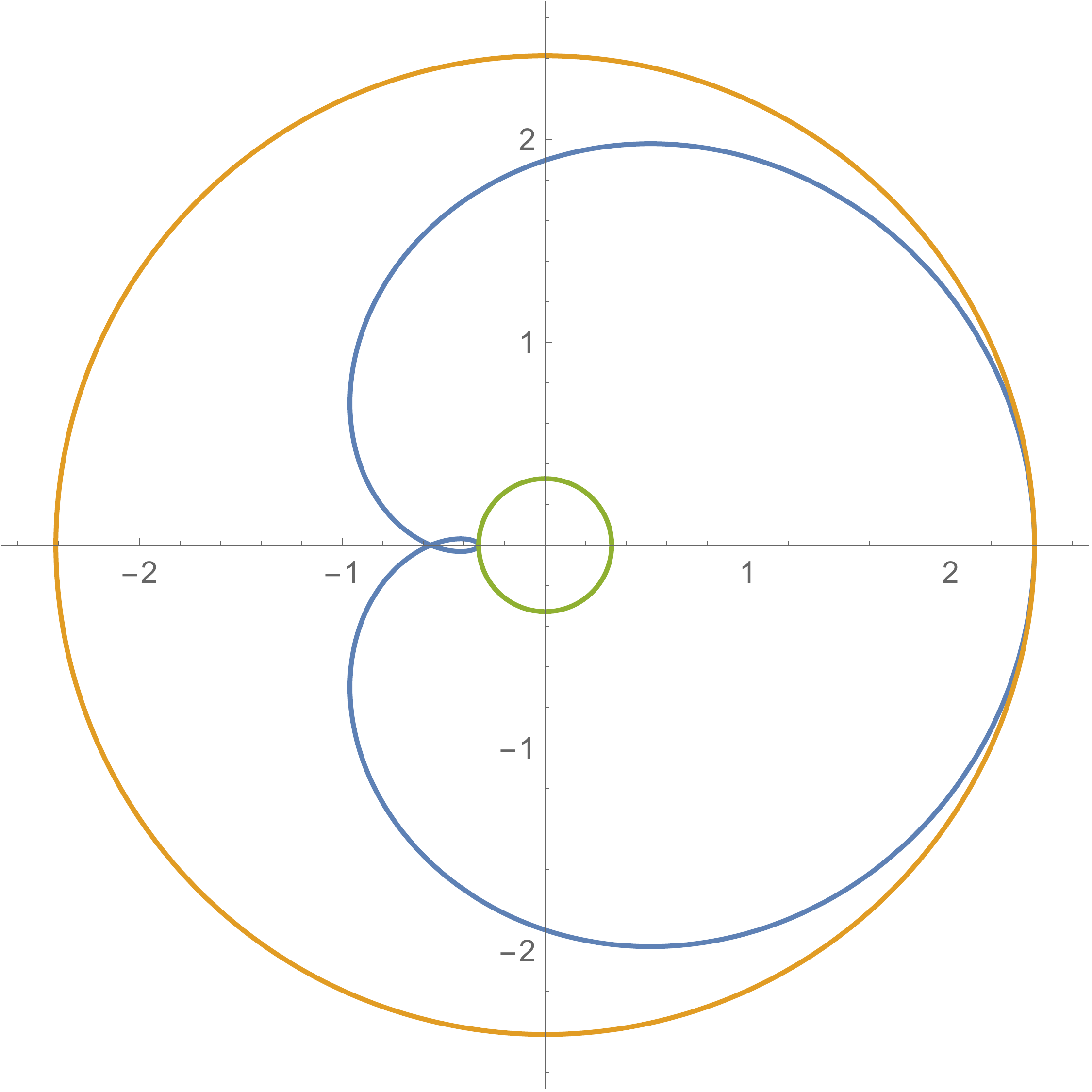}}
\qquad
   \subfloat[ \centering]{\includegraphics[width=30mm, height=40mm]{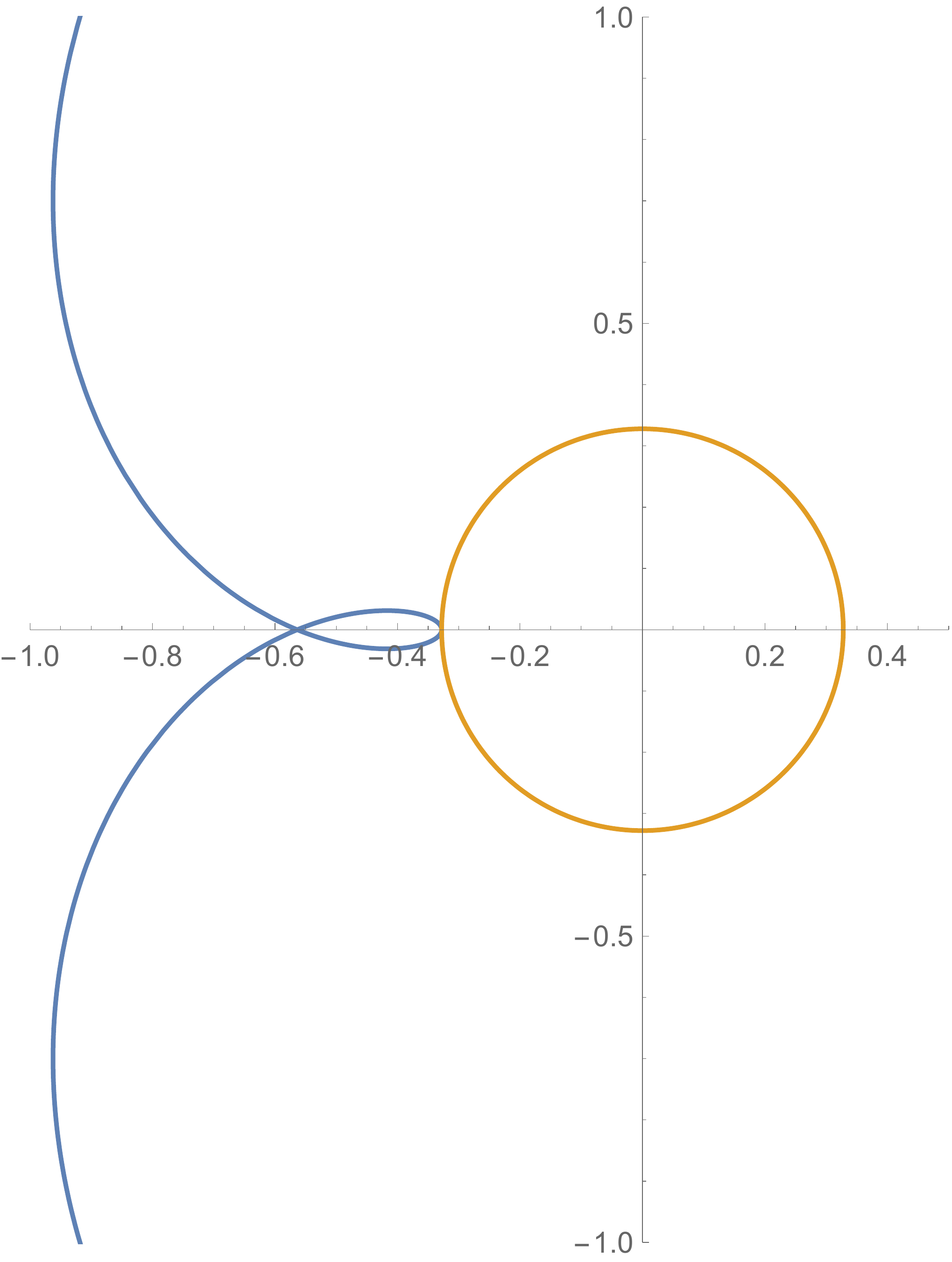}}
    \caption{(A) The images of   $\partial{\mathbb{D}_{f_{A,B}(1)}}$, $f_{A,B}(\partial\mathbb{D})$ and $\partial{\mathbb{D}_{f_{A,B}(-1)}}$, for $A=0.5e^{i\pi/3}$ and $B=0.5e^{-i\pi/3}$ and (B) zoomed image of cusp.}
    \label{fq}
\end{figure}

For $z=r e^{i\theta},$ where $\theta$ is fixed but arbitrary, as a consequence of growth theorem and $\psi_{A,B}(-r)\leq\RE\psi_{A,B}(r e^{i \theta})\leq\psi_{A,B}(r)$,  we obtain
\begin{gather*}
    \log\dfrac{f(z)}{z}=\int_{0}^{r}\dfrac{p(t e^{i \theta})}{t}dt,
    \intertext{where $p(z):=\psi_{A,B}(w(z))$ and $w$ is a Schwarz functon. Further}
    \dfrac{f(z)}{z}=\exp{\int_{0}^{r}\dfrac{p(t e^{i \theta})}{t}dt}
    =\exp\left(\int_{0}^{r}\RE\dfrac{p(t e^{i \theta})}{t}dt+i\int_{0}^{r}\IM\dfrac{p(t e^{i \theta})}{t}dt\right)\\
    \bigg|\dfrac{f(z)}{z}\bigg|\leq\exp\int_{0}^{r}\dfrac{\RE\psi_{A,B}(t e^{i \theta})}{t}dt\\
    \exp\int_{0}^{r}\frac{\psi_{A,B}(-t)}{t}dt\leq  \bigg|\dfrac{f(z)}{z}\bigg|\leq  \exp\int_{0}^{r}\frac{\psi_{A,B}(t)}{t}dt.
\end{gather*}
 Here $L(f, r):=\int_{0}^{2\pi}|zf'(z)|d\theta$ is the length of the boundary curve
$f(|z|=r)$. Now we obtain the following result:
\begin{corollary}
Let $f\in\mathcal{F}[A,B]$ and $M(r)=\exp\int_{0}^{r}\frac{\psi_{A,B}(t)}{t}dt$, then for $|z|=r$, we have
\begin{itemize}
\item[(i)] $M(-r)\leq\bigg|\dfrac{f(z)}{z}\bigg|\leq M(r)$.
\item[(ii)] $(1+\max_{|z|\leq r}|\psi_{A,B}(z)|)M(-r)\leq|f'(z)|\leq(1+\max_{|z|\leq r}|\psi_{A,B}(z)|)M(r)$.
\item[(iii)] $2\pi r(1+\max_{|z|\leq r}|\psi_{A,B}(z)|)M(-r)\leq L(f,r)\leq2\pi r(1+\max_{|z|\leq r}|\psi_{A,B}(z)|)M(r)$.
\item[(iv)] $f(z)/z\prec f_{A,B}(z)/z.$
\end{itemize}
\end{corollary}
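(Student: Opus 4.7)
The core observation is that parts (i)--(iii) follow routinely from the display derived immediately before the corollary together with elementary subordination estimates, while part (iv) is the structural assertion that requires invoking a preservation-under-convex-majorants theorem.

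For (i), note that after the substitution $t\mapsto -t$ one has $M(-r)=\exp\int_{0}^{-r}\psi_{A,B}(t)/t\,dt=\exp\int_{0}^{r}\psi_{A,B}(-t)/t\,dt$, so the chain of inequalities $\exp\int_{0}^{r}\psi_{A,B}(-t)/t\,dt\le|f(z)/z|\le\exp\int_{0}^{r}\psi_{A,B}(t)/t\,dt$ derived just above the corollary (using $\psi_{A,B}(-r)\le\RE\psi_{A,B}(re^{i\theta})\le\psi_{A,B}(r)$ from Remark \ref{rmk1}(i)) is precisely (i). For (ii), I would start from the identity $f'(z)=(f(z)/z)(1+p(z))$, where $p(z):=zf'(z)/f(z)-1=\psi_{A,B}(w(z))$ for some Schwarz function $w$; applying the maximum modulus principle to $\psi_{A,B}\circ w$ gives $|p(z)|\le\max_{|\zeta|\le r}|\psi_{A,B}(\zeta)|$, and the triangle and reverse-triangle inequalities then control $|1+p(z)|$ in terms of $\max_{|z|\le r}|\psi_{A,B}(z)|$. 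Multiplying these pointwise bounds by the bounds from (i) yields (ii). Part (iii) is then immediate from $L(f,r)=r\int_{0}^{2\pi}|f'(re^{i\theta})|\,d\theta$ by bounding the integrand pointwise using (ii) and pulling the (constant in $\theta$) envelope outside the integral.

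The main obstacle is (iv), the subordination $f(z)/z\prec f_{A,B}(z)/z$. Writing $\log(f(z)/z)=\int_{0}^{z}p(t)/t\,dt$ and $\log(f_{A,B}(z)/z)=\int_{0}^{z}\psi_{A,B}(t)/t\,dt$ with $p\prec\psi_{A,B}$, the plan is to invoke the subordination-preservation property of the Alexander integral operator $g\mapsto\int_{0}^{z}g(t)/t\,dt$ under a convex dominant: since $\psi_{A,B}$ was shown to be convex (and vanishes at the origin), a standard theorem delivers $\int_{0}^{z}p(t)/t\,dt\prec\int_{0}^{z}\psi_{A,B}(t)/t\,dt$. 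Equivalently, one can appeal to the classical Ma--Minda growth-subordination argument applied to the convex generator $1+\psi_{A,B}$: the membership $f\in\mathcal{F}[A,B]$ is simply $zf'/f\prec1+\psi_{A,B}$, and that argument uses only this subordination together with convexity of the dominant (never the univalence of $f$ itself), so it carries over intact to the non-univalent class $\mathcal{F}[A,B]$. Exponentiating the resulting subordination of logarithms then delivers $f(z)/z\prec f_{A,B}(z)/z$ by composing the attending Schwarz function through $\exp$, completing (iv).
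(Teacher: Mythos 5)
The paper gives no separate proof of this corollary: part (i) is precisely the displayed chain of inequalities immediately preceding the statement, and (ii)--(iv) are left to the reader, so your reconstruction is the natural thing to compare against. Your (i) reproduces the paper's computation verbatim; (iii) follows from (ii) by integrating over $\theta$ exactly as you say; and for (iv), writing $\log(f(z)/z)=\int_0^z p(t)t^{-1}\,dt$ with $p\prec\psi_{A,B}$, invoking Suffridge's integral--subordination theorem for a convex dominant (convexity of $\psi_{A,B}$ being the paper's earlier lemma), and then exponentiating is the standard Ma--Minda argument and is clearly the intended route. Those three parts are fine.

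The gap is in (ii). From $f'(z)=(f(z)/z)(1+p(z))$ the triangle inequality does give the stated upper bound, but the reverse triangle inequality gives $|1+p(z)|\ge 1-\max_{|\zeta|\le r}|\psi_{A,B}(\zeta)|$, not $1+\max_{|\zeta|\le r}|\psi_{A,B}(\zeta)|$; your assertion that these inequalities ``yield (ii)'' silently turns a minus sign into a plus sign. Moreover, the lower bound as printed cannot be rescued by any argument, because it is false: take $f=f_{A,B}$ and $z=-r$, so that $|f'(-r)|=M(-r)\,|1+\psi_{A,B}(-r)|<M(-r)\bigl(1+\max_{|\zeta|\le r}|\psi_{A,B}(\zeta)|\bigr)$, the inequality being strict since $\psi_{A,B}(-r)<0$. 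The correct constant obtainable from your method is $1-\max_{|\zeta|\le r}|\psi_{A,B}(\zeta)|$, and the sharper choice $1+\psi_{A,B}(-r)$ follows from $|1+p(z)|\ge 1+\RE p(z)\ge 1+\psi_{A,B}(-r)$ together with Theorem \ref{bounds}. This appears to be a misprint in the corollary itself, but a complete proof attempt should either derive the correct constant or flag that the stated one is unattainable, rather than claim the triangle inequality delivers it.
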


\section{Radius Estimates}
In this section, we obtain some radii estimates.
From Theorem \ref{bounds} we conclude that $f\in\mathcal{S}[A,B]$ is starlike of order $ 1+\dfrac{1}{|A-B|}\left(\varphi-\vartheta\right)<0$, hence $f$ may not be univalent in $\mathbb{D}$, thus $\mathcal{F}[A,B]\not\subseteq\mathcal{S}^*.$  Therefore, in the following result, we find the radius of starlikeness of order $\delta$, where $\delta\in[0,1)$ for functions in the class $\mathcal{F}[A,B]$.

\begin{theorem}\label{rad}
Let $\alpha\in(0,1],$ $\gamma\in(0,\gamma_0)$, where $\gamma_0\simeq1.2461\hdots$ and $\delta\in[0,1)$ be given numbers. If $f\in\mathcal{F}[A,B]$, then $f$ is starlike of order $\delta$ in the disc $|z|<r(\delta)$, where
\begin{itemize}
    \item[(i)] For $A=-B=\alpha$,
  \begin{equation}\label{r1}  r(\delta)=\dfrac{\exp(2\alpha(1-\delta))-1}{\alpha(\exp(2\alpha(1-\delta))+1)}.\end{equation}
    \item[(ii)] For $A=\alpha e^{i \gamma}$ and $B=\alpha e^{-i\gamma}$, $r(\delta)$ is the smallest positive root of

\begin{equation}\label{r2}
   \tan(2\alpha\sin{\gamma}(\delta-1))= \dfrac{\alpha^4r^4\sin{2\gamma}+2\alpha^3r^3\sin{\gamma}\cos{2\gamma}-\alpha^2r^2\sin{2\gamma}-2\alpha r\sin{\gamma}}{\alpha^4r^4\cos{2\gamma}-2\alpha^3r^3\sin{\gamma}\sin{2\gamma}-\alpha^2r^2\cos{2\gamma}-\alpha^2r^2+1}.
\end{equation}
\end{itemize}
The result is sharp.
\end{theorem}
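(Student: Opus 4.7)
The plan is to use Theorem \ref{bounds} to convert the defining subordination for $\mathcal{F}[A,B]$ into a sharp lower bound for $\RE(zf'(z)/f(z)-1)$ on $|z|=r$, and then solve for the largest $r$ at which this bound still exceeds $\delta-1$. Sharpness will then follow from the extremal function $f_{A,B}$ together with Remark \ref{rmk1}(i). Throughout, for $f\in\mathcal{F}[A,B]$ set $p(z):=zf'(z)/f(z)-1$; then $p\prec\psi_{A,B}$, so $p\in\mathcal{L}[A,B]$ on every sub-disc $\mathbb{D}_r$, and the condition for starlikeness of order $\delta$ in $|z|<r$ reads $\RE p(z)>\delta-1$.

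For case (i), inequality (\ref{a=-b,mod}) gives $\RE p(z)\geq -\frac{1}{2\alpha}\log\left(\frac{1+\alpha r}{1-\alpha r}\right)$. Setting this bound equal to $\delta-1$ and solving the resulting exponential equation for $r$ is routine and produces (\ref{r1}).

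For case (ii), inequality (\ref{modd}) gives $\RE p(z)\geq -(\eta-\tau)/(2\alpha\sin\gamma)$, so the extremal radius satisfies $\eta-\tau = 2\alpha\sin\gamma(1-\delta)$. I would take $\tan$ of both sides and apply the subtraction formula $\tan(\eta-\tau)=(\tan\eta-\tan\tau)/(1+\tan\eta\tan\tau)$. The key simplification uses the identity $\cos 2\gamma+2\sin^2\gamma=1$, which yields $1+\alpha^4r^4-2\alpha^2r^2\cos 2\gamma-4\alpha^2r^2\sin^2\gamma=(1-\alpha^2r^2)^2$; this reduces $\cos\eta$ to $(1-\alpha^2r^2)/\sqrt{1+\alpha^4r^4-2\alpha^2r^2\cos 2\gamma}$ and hence $\tan\eta = 2\alpha r\sin\gamma/(1-\alpha^2r^2)$. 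Combining this with $\tan\tau$ read off directly from the definition, clearing denominators, and collecting terms reproduces equation (\ref{r2}) after absorbing an overall sign via $\tan(2\alpha\sin\gamma(\delta-1))=-\tan(2\alpha\sin\gamma(1-\delta))$. The threshold $\gamma<\gamma_0$ is pinned down by solvability of (\ref{r2}) in $(0,1)$: the map $r\mapsto\eta-\tau$ is increasing on $[0,1]$, and at $\alpha=r=1$ its value simplifies to $\pi-\gamma$; requiring this to exceed the worst-case target $2\sin\gamma$ (attained at $\alpha=1,\delta=0$) forces $\pi-\gamma>2\sin\gamma$, whose positive root in $(0,\pi/2)$ is $\gamma_0\simeq 1.2461$.

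For sharpness I would take $f=f_{A,B}$ from (\ref{extremal}), for which $zf_{A,B}'(z)/f_{A,B}(z)-1=\psi_{A,B}(z)$. By Remark \ref{rmk1}(i), $\min_{|z|=r}\RE\psi_{A,B}(z)=\psi_{A,B}(-r)$, so at $z_0=-r(\delta)$ the real part of $z_0f_{A,B}'(z_0)/f_{A,B}(z_0)$ equals exactly $\delta$, confirming that the radii cannot be enlarged. I expect the main obstacle to be the algebraic passage from $\eta-\tau=2\alpha\sin\gamma(1-\delta)$ to the identity (\ref{r2}): the simplification of $\cos\eta$ above is the linchpin, and after applying the tangent subtraction formula the numerator and denominator must be expanded carefully and compared termwise with those of (\ref{r2}). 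A secondary subtlety is verifying monotonicity of $\eta-\tau$ in $r$, which is what justifies that the smallest positive root of (\ref{r2}) is indeed the starlikeness radius.
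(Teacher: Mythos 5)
Your proposal is correct and follows essentially the same route as the paper: both use the lower bound on $\RE(zf'(z)/f(z))$ from Theorem \ref{bounds}, observe that this bound is strictly decreasing in $r$ (your monotonicity of $\eta-\tau$ is the paper's $t_2'(r)<0$), solve $t(r)=\delta$ to get (\ref{r1}) and (\ref{r2}), and pin down $\gamma_0$ by the condition at $\alpha=r=1$, which is exactly the paper's equation $g(1,\gamma)=1+(\gamma-\pi)/(2\sin\gamma)=0$, i.e.\ $\pi-\gamma=2\sin\gamma$. Your sharpness argument via $f_{A,B}$ and Remark \ref{rmk1}(i) supplies the detail the paper leaves implicit, and your reduction of the worst case to $\alpha=1$ carries the same (minor, figure-supported) justification burden as the paper's boundary analysis of $g(\alpha,\gamma)$.
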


\begin{proof}
Let $f\in\mathcal{F}[A,B]$, then using Theorem \ref{bounds} for $|z|<r$, we have \\
$\textbf{Case 1.}$ when $A=-B=\alpha$,
\begin{equation*}
    \RE \dfrac{zf'(z)}{f(z)}>1-\dfrac{1}{2\alpha}\log\left(\dfrac{1+\alpha r}{1-\alpha r}\right)=:t_1(r),
\end{equation*} Now as $t_1'(r)<0$ for all choices of $\alpha$, thus $t_1(r)$ is strictly decreasing function  from $1$ to $1-1/(2\alpha)\log((1+\alpha)/(1-\alpha))<0.$ Hence the  root $r(\delta)$, given in (\ref{r1}), of the equation  $t_1(r)=\delta$, is the radius of starlikeness of order $\delta$ of $\mathcal{F}[A,B]$. \\
$\textbf{Case 2.}$ when $A=\alpha e^{i \gamma}$ and $B=\alpha e^{-i\gamma}$
\begin{equation*}
    \RE \dfrac{zf'(z)}{f(z)}>1- \dfrac{1}{2\alpha\sin{\gamma}}(\eta-\tau)=:t_2(r),
\end{equation*}where $\eta$ and $\tau$ are given in Theorem \ref{bounds}.  Since $t_2(r)=\delta$ can be simplified to the equation  (\ref{r2})  by a simple computation, thus it is enough to find the smallest positive root of  (\ref{r2}).
Further
$$t_2'(r)=-\dfrac{1-\alpha^4r^4+2\alpha r \cos{\gamma}(1-\alpha^2r^2)}{(1-\alpha^2r^2)\sqrt{1+\alpha^4r^4-2\alpha^2r^2 \cos{2\gamma}}}<0$$ for all choices of $\alpha$ and $\gamma$, $t_2(r)$ is a strictly decreasing function  from $1$ to $g(\alpha,\gamma)$, where
 \begin{equation*}
    g(\alpha,\gamma):=1+\dfrac{1}{2\alpha\sin{\gamma}}\left(\atan\left(\dfrac{\alpha^2\sin{2\gamma}}{\alpha^2\cos{2\gamma}-1}\right)-\asin\left(\dfrac{2\alpha \sin\gamma}{\sqrt{1+\alpha^4-2\alpha^2\cos2\gamma}}\right)\right).
\end{equation*}\begin{figure}[ht]
    \centering
    \includegraphics[width=80mm, height=50mm]{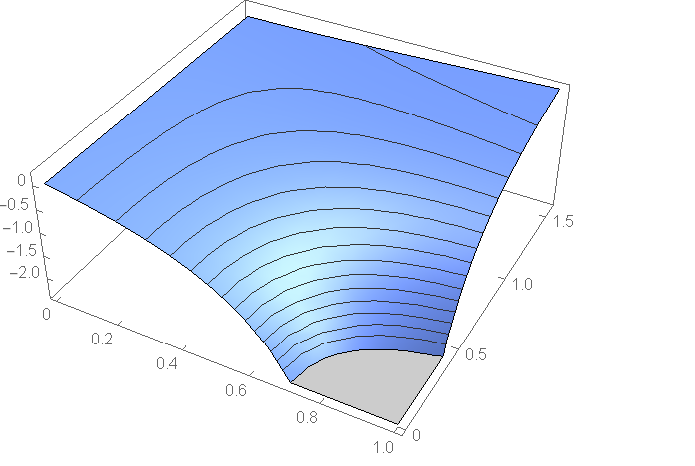}
    \caption{Image of $g(\alpha,\gamma)$ on $R:=(0,1)\times(0,\pi/2)$.}
    \label{h(1)}
\end{figure}Note that $g(\alpha,\gamma)$ is real valued, non-constant analytic function on a bounded domain $R:=(0,1)\times(0,\pi/2)$ and  is bounded above on $R$. Thus by Maximum Modulus Theorem,  $g(\alpha,\gamma)$ attains its maximum value on the boundary of $R$ and the four boundaries are  enlisted below:  \begin{itemize}
    \item[1.] $g(0,\gamma)=0$.
    \item[2.] $g(1,\gamma)=1+(\gamma-\pi)/(2\sin\gamma)$. Now as $$g'(1,\gamma)=(1+(\pi-\gamma)\cot\gamma)/(2\sin\gamma)>0,$$
    therefore $g(1,\gamma)$ is strictly increasing from $-\infty$ to $1-\pi/4\simeq0.2146\hdots$ and $\gamma_0$ is the zero of it.
    \item[3.] $g(\alpha,0)=1-1/(1-\alpha)$. As $g'(\alpha,0)=\log(1-\alpha)<0$, therefore $g(\alpha,0)$ is strictly decreasing from $0$ to $-\infty$.
    \item[4.] $g(\alpha,\pi/2)=1-\left(\asin\left(2\alpha/(1+\alpha^2)\right)\right)/(2\alpha)$. As 
    $$g'(\alpha,\pi/2)=\\ \left(\left(2\alpha/(1+\alpha^2)\right)+ \asin\left(2\alpha/(1+\alpha^2)\right)\right)/(2\alpha^2)>0,$$ 
    therefore $g(\alpha,\pi/2)$ is also strictly increasing from $0$ to $1-\pi/4\simeq0.2146$.
\end{itemize}
From Fig. \ref{h(1)} and all four boundaries, we observe that $g(\alpha,\gamma)<0$ for all choices of  $\alpha\in(0,1],$ $\gamma\in(0,\gamma_0)$, where $\gamma_0$ is the root of the equation
$g(1,\gamma)=0$.
    Hence $r(\delta)$ is the radius of starlikeness of order $\delta$ of $\mathcal{F}[A,B]$.
\end{proof}

\begin{theorem}\label{rad1}
Let $\alpha\in(0,1]$  and $\delta\in[0,1)$. If $f\in\mathcal{F}[A,B]$, for the case when $A=\alpha e^{i \gamma}$ and $B=\alpha e^{-i\gamma}$, then there is an unique $\gamma'\in(\gamma_0,\pi/2)$ such that
\begin{itemize} \item[(i)]  when $\gamma\in(\gamma_0,\gamma']$, $f$ is starlike of order $\delta$ in $\mathbb{D}_{r(\delta)},$ where $r(\delta)$ the root of the equation $(\ref{r2}).$
\item[(ii)] when $\gamma\in(\gamma',\pi/2)$, $f$ is starlike of order $\delta\in[0,g(\alpha,\pi/2))$ in $\mathbb{D}$ and starlike of order $\delta\in[g(\alpha,\pi/2),1)$ in $\mathbb{D}_{r(\delta)}.$ \end{itemize}
\end{theorem}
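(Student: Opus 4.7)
The plan is to extend the one-variable argument used in Theorem~\ref{rad} to the larger $\gamma$-range $(\gamma_0,\pi/2)$, where the terminal value $g(\alpha,\gamma)=\lim_{r\to 1^-}t_2(r)$ need no longer be negative. Observe that the derivative computation $t_2'(r)<0$ from the proof of Theorem~\ref{rad} is valid for every $(\alpha,\gamma)\in(0,1]\times(0,\pi/2)$, so $t_2$ is still a continuous, strictly decreasing bijection from $[0,1)$ onto $(g(\alpha,\gamma),1]$. Hence the whole question reduces to locating the sign and value of $g(\alpha,\gamma)$.

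First I would study the map $\gamma\mapsto g(\alpha,\gamma)$ for fixed $\alpha\in(0,1]$. The boundary inventory collected in the proof of Theorem~\ref{rad} already gives $g(\alpha,0^{+})<0$ and $g(\alpha,\pi/2)>0$. The crucial step is to prove $\partial g/\partial\gamma>0$ on $(0,\pi/2)$, which by the intermediate value theorem produces a unique zero $\gamma'=\gamma'(\alpha)\in(0,\pi/2)$. Combining this with monotonicity of $g$ in $\alpha$ (which yields $g(\alpha,\gamma)\le g(1,\gamma)$) and the defining equation $g(1,\gamma_0)=0$, one obtains $\gamma'\ge\gamma_0$, with strict inequality for $\alpha\in(0,1)$; this gives the unique $\gamma'\in(\gamma_0,\pi/2)$ required by the theorem.

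With $\gamma'$ identified, both parts follow from the monotonicity of $t_2$. For part~(i), $\gamma\in(\gamma_0,\gamma']$ gives $g(\alpha,\gamma)\le 0\le\delta$, so $t_2(r)=\delta$ has a unique root $r(\delta)\in(0,1]$; rearranging this equation recovers~(\ref{r2}), and sharpness is delivered by the extremal $f_{A,B}$ of~(\ref{extremal}) evaluated at $z=-r(\delta)$, exactly as in Theorem~\ref{rad}. For part~(ii), $\gamma\in(\gamma',\pi/2)$ gives $g(\alpha,\gamma)>0$, so $\RE(zf'(z)/f(z))>g(\alpha,\gamma)$ throughout $\mathbb{D}$; when $\delta$ lies below the stated threshold one has $t_2(r)>\delta$ on all of $[0,1)$ and $f$ is starlike of order $\delta$ in the full unit disk, whereas when $\delta$ reaches or exceeds the threshold the monotone $t_2$ hits $\delta$ exactly once and one again recovers the sharp radius from~(\ref{r2}).

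The main obstacle is the monotonicity claim $\partial g/\partial\gamma>0$. A direct differentiation of the $\asin$--$\atan$--$\log$ composition defining $g$ is feasible but unpleasant, so I would first seek a geometric proof: the quantity $1+g(\alpha,\gamma)$ is the minimum of $\RE(1+\psi_{A,B})$ on $\partial\mathbb{D}$, which can be read off the ellipse description in~(\ref{ellipse}), and I expect the leftmost point of that ellipse to move monotonically with $\gamma$. The same geometric argument should then secure the $\alpha$-monotonicity needed to pin down $\gamma'$ uniquely inside $(\gamma_0,\pi/2)$.
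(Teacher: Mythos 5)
Your overall strategy --- reduce everything to the sign of the terminal value $g(\alpha,\gamma)=\lim_{r\to1^-}t_2(r)$, exploit the strict decrease of $t_2$ in $r$, and locate a unique sign change $\gamma'$ of $g(\alpha,\cdot)$ by the intermediate value theorem --- is exactly the paper's. The paper's own proof is very short: it asserts $g(\alpha,\gamma_0)<0$ and $g(\alpha,\pi/2)>0$ for all $\alpha\in(0,1)$ by appealing to the boundary analysis from Theorem \ref{rad} and to Fig.~\ref{h(1)}, then invokes the intermediate value property; the monotonicity of $g$ in $\gamma$ that underlies uniqueness and the sign pattern is likewise read off the figure rather than proved. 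Your intention to actually establish $\partial g/\partial\gamma>0$ would, if carried out, make the argument more rigorous than the published one, but as written it is only announced (``I would first seek a geometric proof\dots I expect\dots''), so the pivotal step of the whole theorem is still missing from your attempt just as it is, strictly speaking, from the paper's.

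There is also one step that fails as stated: the claim that $g$ is monotone in $\alpha$ with $g(\alpha,\gamma)\le g(1,\gamma)$, which you use together with $g(1,\gamma_0)=0$ to conclude $\gamma'\ge\gamma_0$. That inequality is false in general. Indeed, from the paper's own boundary computations, $g(\alpha,\gamma)\to 1-1/(1-\alpha)$ as $\gamma\to0^+$, which is finite for each fixed $\alpha\in(0,1)$, whereas $g(1,\gamma)=1+(\gamma-\pi)/(2\sin\gamma)\to-\infty$; hence $g(\alpha,\gamma)>g(1,\gamma)$ for small $\gamma$, so $g$ is not increasing in $\alpha$ for every $\gamma$ (compare also item~3 of the boundary list, where $g(\alpha,0)$ is strictly \emph{decreasing} in $\alpha$). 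What you actually need is only the single inequality $g(\alpha,\gamma_0)<0$ for all $\alpha\in(0,1)$ --- this is precisely what the paper asserts and what lets it run the intermediate value argument on $(\gamma_0,\pi/2)$ directly --- and it cannot be derived from a global $\alpha$-monotonicity that does not hold. Either verify $g(\alpha,\gamma_0)<0$ directly (e.g.\ by bounding $\alpha\mapsto g(\alpha,\gamma_0)$ on $(0,1)$), or restrict and prove the $\alpha$-comparison at $\gamma=\gamma_0$ only; otherwise the conclusion that $\gamma'$ lies in $(\gamma_0,\pi/2)$, rather than merely in $(0,\pi/2)$, is not justified.
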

\begin{proof}
By observing the Fig. \ref{h(1)} and the nature of all boundary curves discussed in Theorem \ref{rad},  we arrive at  $g(\alpha,\gamma_0)<0$  and $g(\alpha,\pi/2)>0$ for all $\alpha\in(0,1)$. Thus by IVP for any choice of $\alpha\in(0,1)$, there is an unique $\gamma'\in(\gamma_0,\pi/2)$ such that $g(\alpha,\gamma')=0$, $g(\alpha,\gamma)<0$ for all $\gamma\in(\gamma_0,\gamma')$ and $g(\alpha,\gamma)>0$ for all $\gamma\in(\gamma',\pi/2).$ Hence the result follows.
\end{proof}
Taking $\delta=0$ in Theorems \ref{rad} and \ref{rad1}, we obtain the following result:
\begin{corollary}\label{cr}
Let $\alpha\in(0,1]$  and $\gamma\in(0,\pi/2]$. If $f\in\mathcal{F}[A,B]$ then
\begin{itemize}
    \item[(i)] For $A=-B=\alpha$, $f$ is starlike univalent in the disc $|z|<r_0$, where
  \begin{equation*}  r_0=\dfrac{\exp(2\alpha)-1}{\alpha(\exp(2\alpha)+1)}.\end{equation*}
    \item[(ii)] For $A=\alpha e^{i \gamma}$ and $B=\alpha e^{-i\gamma}$, $f\in\mathcal{S}^*$, whenever  $\gamma\in(\gamma',\pi/2)$,  and $f$ is starlike univalent in the disc $|z|<r_0$, whenever $\gamma\in(0,\gamma_0)\cup(\gamma_0,\gamma']$, where $\gamma_0$, $\gamma'$ are given in Theorems \ref{rad} and \ref{rad1}, respectively and $r_0$ is the smallest positive root of

\begin{equation*}
   \tan(2\alpha\sin{\gamma})+ \dfrac{\alpha^4r^4\sin{2\gamma}+2\alpha^3r^3\sin{\gamma}\cos{2\gamma}-\alpha^2r^2\sin{2\gamma}-2\alpha r\sin{\gamma}}{\alpha^4r^4\cos{2\gamma}-2\alpha^3r^3\sin{\gamma}\sin{2\gamma}-\alpha^2r^2\cos{2\gamma}-\alpha^2r^2+1}=0.
\end{equation*}\end{itemize}The result is sharp.
\end{corollary}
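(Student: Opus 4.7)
The plan is to obtain this corollary as a direct specialization of Theorems \ref{rad} and \ref{rad1} at the order of starlikeness $\delta = 0$. Part (i) requires essentially no work: substituting $\delta = 0$ into formula (\ref{r1}) gives the stated value $r_0 = (\exp(2\alpha)-1)/(\alpha(\exp(2\alpha)+1))$ for the case $A = -B = \alpha$, and sharpness carries over from Theorem \ref{rad}.

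For part (ii), I would split on the location of $\gamma$ relative to the critical angles $\gamma_0$ and $\gamma'$ introduced in Theorems \ref{rad} and \ref{rad1}. When $\gamma \in (0, \gamma_0)$, Theorem \ref{rad}(ii) applies directly: substituting $\delta = 0$ into (\ref{r2}) turns the left-hand side into $\tan(-2\alpha\sin\gamma) = -\tan(2\alpha\sin\gamma)$, which I transpose to obtain the displayed equation whose smallest positive root is $r_0$. When $\gamma \in (\gamma_0, \gamma']$, Theorem \ref{rad1}(i) yields the same equation, since in this range $g(\alpha, \gamma) \leq 0$ guarantees that the equation $t_2(r) = 0$ has a genuine root in $(0, 1)$. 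When $\gamma \in (\gamma', \pi/2)$, the boundary analysis carried out inside the proof of Theorem \ref{rad} together with Theorem \ref{rad1}(ii) shows that $g(\alpha,\gamma) > 0$ there, so $\delta = 0$ sits inside $[0, g(\alpha,\pi/2))$ and Theorem \ref{rad1}(ii) forces $f \in \mathcal{S}^*$ on all of $\mathbb{D}$.

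Sharpness in both parts is inherited from the parent theorems via the extremal function $f_{A,B}$ defined in (\ref{extremal}); this is the image of the Schwarz function $w(z) = z$ that saturates the estimates proved in Theorem \ref{bounds}, and at $z = -r_0$ the real part of $zf_{A,B}'(z)/f_{A,B}(z)$ drops exactly to $0$, precluding any enlargement of the radius. The only real bookkeeping is the tripartite case split in (ii) and making sure the sign of $g(\alpha,\gamma)$ is tracked correctly across the threshold $\gamma'$; the rearrangement of (\ref{r2}) is mechanical, so I do not anticipate any genuine obstacle beyond careful citation of the appropriate subclauses of Theorems \ref{rad} and \ref{rad1}.
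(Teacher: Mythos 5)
Your proposal is correct and matches the paper's route exactly: the paper obtains this corollary simply by setting $\delta=0$ in Theorems \ref{rad} and \ref{rad1}, which is precisely the specialization, sign rearrangement of (\ref{r2}), and case split on $\gamma$ that you carry out.
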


\begin{corollary}
Let $f\in\mathcal{F}[A,B]$. Then for the disc $|z|< r\leq r_0$, where $r_0$ is given in Corollary \ref{cr}, we have
\begin{itemize}
\item[(i)] For $A=-B=\alpha$,
\begin{equation*}
\bigg|\arg\biggl(\dfrac{zf'(z)}{f(z)}\biggr)\bigg|<\atan\left(\dfrac{\asin\left(\frac{2\alpha r}{1+\alpha^2 r^2}\right)}{2\alpha-\log\left(\frac{1+\alpha r}{1-\alpha r}\right)}\right)
\end{equation*}
\item[(ii)] For $A=\alpha e^{i \gamma}$ and $B=\alpha e^{-i\gamma}$,
\begin{equation*}
\bigg|\arg\biggl(\dfrac{zf'(z)}{f(z)}\biggr)\bigg|<\atan\biggl(\dfrac{\log T_2}{2\alpha\sin{\gamma}-\eta+\tau}\biggr),
\end{equation*}where $T_2$, $\eta$ and $\tau$ are given in Theorem \ref{bounds}.
\end{itemize}
\end{corollary}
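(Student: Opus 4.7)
The plan is to set $p(z) := zf'(z)/f(z) - 1$, so by hypothesis $p \prec \psi_{A,B}$ and $zf'(z)/f(z) = 1 + p(z)$. Since $|z| \leq r \leq r_0$, Corollary \ref{cr} guarantees that $f$ is starlike on this disc, so $1 + \RE p(z) > 0$. This lets me work on the principal branch and write
$$\Arg\!\left(\frac{zf'(z)}{f(z)}\right) \;=\; \atan\!\left(\frac{\IM p(z)}{1 + \RE p(z)}\right).$$
Because $\atan$ is odd and strictly increasing on the positive axis, bounding $|\arg(zf'/f)|$ reduces to maximising $|\IM p(z)|$ and minimising $1 + \RE p(z)$; the stated bound in each case is then $\atan$ of the ratio.

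For part (i), Theorem \ref{bounds}(i) supplies both pieces directly: $|\IM p(z)| \leq \frac{1}{2\alpha}\asin(2\alpha r/(1+\alpha^{2} r^{2}))$ and $1 + \RE p(z) \geq \frac{1}{2\alpha}(2\alpha - \log((1+\alpha r)/(1-\alpha r)))$. The positivity of the second lower bound is precisely the condition $r \leq r_0$ from Corollary \ref{cr} (it is $1/(2\alpha)$ times the quantity $t_1(r)$ appearing in Theorem \ref{rad}). Substituting and cancelling the common factor $1/(2\alpha)$ inside the arctangent yields the stated expression exactly.

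For part (ii), I would run the same scheme using Theorem \ref{bounds}(ii), which gives $1 + \RE p(z) \geq (2\alpha\sin\gamma - \eta + \tau)/(2\alpha\sin\gamma)$. The one subtlety is to consolidate the two-sided inequality $\log T_{2} \leq 2\alpha\sin\gamma\cdot\IM p(z) \leq \log T_{1}$ into a single upper bound on $|\IM p|$. The clean way is to record the algebraic identity $T_{1}T_{2} = 1$: computing the product, the denominator becomes $(1+\alpha^{4}r^{4}-2\alpha^{2}r^{2}\cos 2\gamma) - 4\alpha^{2}r^{2}\sin^{2}\gamma = (1-\alpha^{2}r^{2})^{2}$, where I have used $\cos 2\gamma + 2\sin^{2}\gamma = 1$. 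Hence $\log T_{1} = -\log T_{2}$, so $|\IM p(z)| \leq \log T_{1}/(2\alpha\sin\gamma) = -\log T_{2}/(2\alpha\sin\gamma)$. Substituting into the arctangent and cancelling $(2\alpha\sin\gamma)^{-1}$ from numerator and denominator produces the displayed bound (with the convention $|\log T_2| = \log T_1$ implicit in the formula).

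The main obstacle is really bookkeeping rather than analysis: I must carefully invoke the definition of $r_{0}$ from Corollary \ref{cr} to certify that $2\alpha\sin\gamma - \eta + \tau > 0$ on $|z| \leq r_{0}$ (so that $\atan$ applies to a well-defined positive ratio), and I must use the identity $T_{1}T_{2} = 1$ to package the bound in the compact form stated. No tools beyond Theorem \ref{bounds} and Corollary \ref{cr} are needed.
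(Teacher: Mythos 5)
Your proposal is correct and follows exactly the route the paper intends (the paper states this corollary without proof, as an immediate consequence of Theorem \ref{bounds} combined with the positivity of $\RE(zf'/f)$ guaranteed for $r\leq r_0$ by Corollary \ref{cr}): bound $|\IM p|$ from above and $1+\RE p$ from below, then apply the monotonicity of $\atan$. Your observation that $T_1T_2=1$, so that the stated $\log T_2$ in part (ii) must be read as $|\log T_2|=\log T_1$, is a genuine and worthwhile clarification of the paper's formula rather than a deviation from its argument.
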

\begin{corollary}
Let $f\in\mathcal{F}[A,B]$. Then for the disc $|z|< r_s\leq r_0$,  where $r_0$ is given in Corollary \ref{cr}, $f\in\mathcal{SS}^*(\beta)$, where $r_s\in(0, r_0]$ is the smallest positive root of the following equation:
\begin{itemize}
\item[(i)] For $A=-B=\alpha$,
\begin{equation*}
\asin\left(\dfrac{2\alpha r}{1+\alpha^2 r^2}\right)=\tan\left(\dfrac{\beta\pi}{2}\right)\left(2\alpha-\log\left(\frac{1+\alpha r}{1-\alpha r}\right)\right)
\end{equation*}
\item[(ii)] For $A=\alpha e^{i \gamma}$ and $B=\alpha e^{-i\gamma}$,
\begin{equation*}
\dfrac{\log T_2}{2\alpha\sin{\gamma}-\eta+\tau}=\tan\left(\dfrac{\beta\pi}{2}\right),
\end{equation*}where $T_2$, $\eta$ and $\tau$ are given in Theorem \ref{bounds}.
\end{itemize}
\end{corollary}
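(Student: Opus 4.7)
The plan is to apply the previous corollary, which already furnishes the bound $|\arg(zf'(z)/f(z))| < \atan(N(r)/D(r))$ on the disc $|z| < r \le r_0$, where in case (i) we have $N(r) = \asin(2\alpha r/(1+\alpha^2 r^2))$ and $D(r) = 2\alpha - \log((1+\alpha r)/(1-\alpha r))$, and in case (ii) $N(r) = \log T_2$ and $D(r) = 2\alpha \sin\gamma - \eta + \tau$ with $T_2, \eta, \tau$ as in Theorem \ref{bounds}. Since $\atan$ is strictly increasing, the defining condition of $\mathcal{SS}^*(\beta)$, namely $|\arg(zf'(z)/f(z))| < \beta\pi/2$, is implied by $N(r)/D(r) \le \tan(\beta\pi/2)$; the two displayed equations are exactly $N(r)/D(r) = \tan(\beta\pi/2)$, so it remains to verify that this equation has a smallest positive root $r_s$ lying in $(0, r_0]$, and that the bound indeed holds on $|z| < r_s$.

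First I would treat case (i). On $(0, r_0)$, the numerator $N(r)$ is strictly increasing from $0$, while the denominator $D(r)$ is strictly decreasing from $2\alpha$ and vanishes precisely at $r = r_0$ by the very definition of $r_0$ in Corollary \ref{cr} (which, unwinding Theorem \ref{rad}, corresponds to $t_1(r_0) = 0$). Hence $N/D$ is a continuous, strictly increasing map from $(0, r_0)$ onto $(0, +\infty)$, so for each $\beta \in (0, 1]$ there is a unique $r_s \in (0, r_0)$ solving the stated equation, and the strict monotonicity gives $N(r)/D(r) < \tan(\beta\pi/2)$ on $(0, r_s)$, which is what we need.

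For case (ii) the argument is analogous: in the admissible range of $\gamma$ recorded in Corollary \ref{cr}, the denominator $2\alpha \sin\gamma - \eta + \tau$ again vanishes exactly at $r = r_0$ (this being the content of $t_2(r_0) = 0$ in Theorem \ref{rad}), while the numerator $\log T_2$ is positive and increasing on $(0, r_0)$. The same monotonicity argument then produces a unique smallest positive root $r_s \in (0, r_0]$, with the arg bound holding strictly on the smaller disc.

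The main obstacle is the monotonicity and limiting behaviour of the ratio $N/D$, specifically verifying that $D$ is strictly positive on $(0, r_0)$ with $D(r_0) = 0$. The sign and monotonicity of $D$ come from direct differentiation (in case (ii), the derivative $t_2'(r)$ was already computed in Theorem \ref{rad} and shown to be strictly negative), and the vanishing at $r_0$ is immediate from the defining equations $t_1(r_0) = 0$ and $t_2(r_0) = 0$ used in Corollary \ref{cr}. Once these properties are in place, the intermediate value theorem delivers the smallest positive root $r_s$, and the stated equation is simply the equality $N(r_s)/D(r_s) = \tan(\beta\pi/2)$.
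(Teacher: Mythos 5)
Your proposal is correct and follows essentially the same route the paper intends: the corollary is an immediate consequence of the preceding argument bound, with $r_s$ defined as the smallest positive root of $N(r)/D(r)=\tan(\beta\pi/2)$, and your monotonicity/IVT verification (using $D(r)=2\alpha\,t_1(r)$, resp. $2\alpha\sin\gamma\,t_2(r)$, which vanishes at $r_0$) supplies the existence of $r_s\in(0,r_0]$ that the paper leaves implicit. The only caveat is inherited from the paper's notation: since $T_1T_2=1$ and $T_2\leq 1$, the quantity $\log T_2$ is in fact nonpositive, so the numerator in case (ii) should be read as $\log T_1=|\log T_2|$ for your positivity claim to hold.
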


We now discuss the radius estimates for  the classes $\mathcal{BS}^*(\alpha)$  and $\mathcal{S}^*_{cs},$ which are not contained in $\mathcal{S}^*$. Further we derive radii for functions in $\mathcal{BS}^*(\alpha)$ and $\mathcal{S}^*_{cs}$ to be in $\mathcal{F}[A,B].$
\begin{theorem}
Let $\alpha\in(0,1)$, $r_{0}=(-1+\sqrt{1+4\alpha h_1^2})/(2\alpha h_1)$ and $\alpha_0\in(0,1)$ is the smallest solution of \begin{equation*}
 1-\dfrac{r_0^2 \cos^2{\theta}(1-\alpha r_0^2)^2}{(1+\alpha^2 r_0^4-2\alpha r_0^2\cos{2\theta})^2h_1^2}-\dfrac{r_0^2 \sin^2{\theta}(1+\alpha r_0^2)^2}{(1+\alpha^2 r_0^4-2\alpha r_0^2\cos{2\theta})^2h_2^2}=0,
 \end{equation*}where $\theta\in(0,\pi/2)$. If $f\in\mathcal{BS}^*(\alpha)$ then for the disc $|z|\leq r_b<1$, $f\in\mathcal{F}[A,B]$, where
\begin{equation*}r_b=\begin{cases}r_0,&\text{for}\; A=-B=\alpha\leq\alpha_0\\
r_1:=\dfrac{-1+\sqrt{1+4\alpha h_2^2}}{2\alpha h_2},&\text{for}\; A=-B=\alpha>\alpha_0\\
r_2:=\dfrac{-1+\sqrt{1+4\alpha (k_1+k)^2}}{2\alpha (k_1+k)},&\text{for}\; A=\alpha e^{i \gamma}\; \text{and}\; B=\alpha e^{-i\gamma}.
\end{cases}
\end{equation*} Moreover the radii $r_0$ and $r_2$ are sharp.
\end{theorem}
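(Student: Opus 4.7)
The plan is to use the principle of subordination to reduce the question to a geometric containment, and then compare the image of $\phi(z):=z/(1-\alpha z^2)$ on $|z|=r$ with $\psi_{A,B}(\mathbb{D})$. The hypothesis $f\in\mathcal{BS}^*(\alpha)$ gives $zf'(z)/f(z)-1\prec\phi(z)$ on $\mathbb{D}$, so by subordination the values of $zf'(z)/f(z)-1$ on $\overline{\mathbb{D}_r}$ lie in $\phi(\overline{\mathbb{D}_r})$; hence it suffices to show $\phi(\overline{\mathbb{D}_r})\subseteq\psi_{A,B}(\mathbb{D})$. Because $\psi_{A,B}(\mathbb{D})$ is convex (by the convexity lemma proved earlier), by the maximum principle this reduces to checking that the boundary curve $\phi(\partial\mathbb{D}_r)$ lies inside $\overline{\psi_{A,B}(\mathbb{D})}$.

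Writing $z=re^{i\theta}$, a direct computation gives
\[
\RE\phi(re^{i\theta})=\frac{r\cos\theta\,(1-\alpha r^2)}{D(r,\theta)},\qquad \IM\phi(re^{i\theta})=\frac{r\sin\theta\,(1+\alpha r^2)}{D(r,\theta)},
\]
with $D(r,\theta):=1+\alpha^2r^4-2\alpha r^2\cos 2\theta$. For the case $A=-B=\alpha$, with $\psi_{A,B}(\mathbb{D})=\Omega_1$ the ellipse of semi-axes $h_1,h_2$, I set
\[
G(r,\theta):=1-\frac{[\RE\phi(re^{i\theta})]^2}{h_1^2}-\frac{[\IM\phi(re^{i\theta})]^2}{h_2^2},
\]
and the containment is equivalent to $G(r,\theta)\ge 0$ for all $\theta$. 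At $\theta=0$ one has $G(r,0)=0$ precisely when $r=r_0$; the displayed equation in the theorem is nothing but $G(r_0,\theta)=0$ for some $\theta\in(0,\pi/2)$, and so defines $\alpha_0$ as the first $\alpha$ at which the ellipse is touched away from the real axis. Thus for $\alpha\le\alpha_0$ the image fits and $r_0$ is sharp (equality forced at $\theta=0$). For $\alpha>\alpha_0$ the curve exits $\Omega_1$ at $r=r_0$, so I invoke the inscribed disc $\{|w|<h_2\}\subset\Omega_1$ from Lemma \ref{inc}(i): the constraint $|\phi(z)|\le h_2$ on $|z|\le r$ simplifies to $r/(1-\alpha r^2)\le h_2$, whose positive root is $r_1$.

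For the case $A=\alpha e^{i\gamma}$, $B=\alpha e^{-i\gamma}$, I would abandon the explicit ellipse test and use instead the inscribed disc $\{|w|<k_1+k\}\subset\psi_{A,B}(\mathbb{D})$ provided by Lemma \ref{inc}(ii). The requirement $|\phi(z)|\le k_1+k$ on $|z|\le r$ becomes $r/(1-\alpha r^2)\le k_1+k$, whose positive root is $r_2$. Sharpness of $r_2$ follows because, by the proof of Lemma \ref{inc}, $k_1+k=\min_{|z|=1}|\psi_{A,B}(z)|$, so the circle $|w|=k_1+k$ meets $\partial\psi_{A,B}(\mathbb{D})$ at the point nearest to the origin; the extremal function $f$ with $zf'(z)/f(z)-1=\phi(z)$ then attains this boundary point at $z=r_2$. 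Sharpness of $r_0$ is witnessed by the same extremal function, since $\phi(r_0)=h_1\in\partial\Omega_1$.

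The main obstacle will be the rigorous verification that for $\alpha\le\alpha_0$ the one-variable map $\theta\mapsto G(r_0,\theta)$ stays non-negative on $[0,\pi/2]$ (symmetry in $\theta$ takes care of the rest). This requires a careful analysis of the critical points of $G(r_0,\cdot)$ and an argument that $\alpha_0$, characterized as the smallest $\alpha$ satisfying the implicit equation, is indeed the exact threshold between the image lying inside and crossing the ellipse. A secondary subtlety is that $r_1$ is not claimed sharp: the true sharp radius for $\alpha>\alpha_0$ lies strictly between $r_1$ and $r_0$, but the inscribed-disc bound only delivers the conservative value $r_1$.
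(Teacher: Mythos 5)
Your proposal follows essentially the same route as the paper: the necessary condition $\max_{|z|=r}\RE\phi(z)=r/(1-\alpha r^2)\le h_1$ at $\theta=0$ yielding $r_0$, the ellipse-membership equation at $r=r_0$ defining the threshold $\alpha_0$, and the inscribed discs of Lemma \ref{inc} giving $r_1$ and $r_2$ for the remaining cases, with sharpness read off from the extremal function touching $\partial\Omega_1$ on the positive real axis. The only step you flag as an obstacle --- verifying that the curve $\phi(r_0e^{i\theta})$ stays inside the ellipse for $\alpha\le\alpha_0$ --- is the one the paper treats by a slightly different device: it computes the squared distance from the origin to $B_{r_0}(\theta)$, namely $r_0^2/(1+\alpha^2r_0^4-2\alpha r_0^2\cos2\theta)$, shows it is decreasing on $[0,\pi/2]$, and concludes that the farthest point from the origin is $(r_0/(1-\alpha r_0^2),0)=(h_1,0)\in\partial\Omega_1$; your function $G(r_0,\theta)$ is an equivalent containment test and your reading of the defining equation for $\alpha_0$ matches the paper's. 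So the approaches coincide in substance, and neither supplies more detail than the other at the one delicate point.
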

\begin{proof}
Let $f\in\mathcal{BS}^*(\alpha)$. Since for $A=-B=\alpha$, $\max_{|z|=1}\RE \psi_{A,B}(z)=h_1$. Thus to find such $r<1$ for which the image of $zf'(z)/f(z)-1$ under the disc $|z|<r$ lies inside $\psi_{A,B}(\mathbb{D})$, it is necessary that \begin{equation}\label{nec1}\max_{|z|=r<1}\RE\left(\dfrac{z}{1-\alpha z^2}\right)=\dfrac{r}{1-\alpha r^2}\leq h_1\end{equation} must hold. Clearly for $|z|\leq r_0$, the inequality (\ref{nec1}) holds. Now to see that for  $\alpha\leq \alpha_0$, radius $r_0$ is also sufficient for $zf'(z)/f(z)-1\prec z/(1-\alpha z^2)\in\psi_{A,B}(\mathbb{D})$ in the disc $|z|\leq r_0$. For $\zeta=r e^{i\theta}$ $(\theta\in[0,2\pi))$, we have $$B_r(\theta):=\dfrac{\zeta}{1-\alpha\zeta^2}=\dfrac{r \cos{\theta}(1-\alpha r^2)}{1+\alpha^2 r^4-2\alpha r^2\cos{2\theta}}+i \dfrac{r \sin{\theta}(1+\alpha r^2)}{1+\alpha^2 r^4-2\alpha r^2\cos{2\theta}}.$$ Since $\RE B_r(\theta)=\RE B_r(-\theta)$, $\RE B_r(\theta)=-\RE B_r(\pi-\theta)$ and $\IM B_r(\theta)=\IM B_r(\pi-\theta)$, therefore the curve $B_r(\theta)$ is symmetric about real and imaginary axis thus it is sufficient  to consider for $\theta\in[0,\pi/2].$ Now for $r=r_0,$ the square of the distance from the origin to the points of $B_{r_0}(\theta)$ is given by \begin{equation*}
Dist(0;B_{r_0}(\theta)):=\dfrac{r_0^2}{1+\alpha^2 r_0^4-2\alpha r_0^2\cos{2\theta}}.
\end{equation*} Since $Dist(0;B_{r_{0}}(\theta))'<0$, thus $Dist(0;B_{r_0}(\theta))$ is a decreasing function of $\theta.$ Hence the farthest  point of $B_{r_0}(\theta)$ from origin is $(r_0/(1-\alpha r_0^2),0)$, which lies on the boundary of $\Omega_1$. Now for $A=-B=\alpha>\alpha_0$ and $A=\alpha e^{i \gamma}$, $B=\alpha e^{-i\gamma}$ with $|z|=r$, we have $$\bigg|\dfrac{zf'(z)}{f(z)}-1\bigg|\leq\dfrac{r}{1-\alpha r^2}.$$ Therefore, by Lemma \ref{inc}, we see that $\mathcal{F}[A,B]-$radius for the class $\mathcal{BS}^*(\alpha)$ are the smallest positive roots $r_1$ and $r_2$ of the equations $r/(1-\alpha r^2)= h_2$ and $r/(1-\alpha r^2)= k_1+k,$ respectively.
\end{proof}

\begin{theorem}
Let $\alpha\in(0,1)$ and $\alpha_0\in(0,1)$ be the smallest solution of 
\begin{equation*}
\dfrac{r_0^2 ((\alpha-1)r_0+\cos{\theta}(1-\alpha r_0^2))^2}{h_1^2}+\dfrac{r_0^2 (1+\alpha r_0^2)^2\sin^2{\theta}}{h_2^2}=N,
 \end{equation*}
 where $\theta\in(0,\pi/2)$ and
 $$N=(1+r_0^2-2r_0\cos\theta)^2(1+\alpha^2 r_0^2+2\alpha r_0\cos{\theta})^2$$
 and
 $$r_{0}=(-(1+(1-\alpha)h_1)+\sqrt{(1+(1-\alpha)h_1)^2+4\alpha h_1^2})/(2\alpha h_1).$$
 If $f\in\mathcal{S}^*_{cs}(\alpha)$ then for the disc $|z|\leq r_{cs}<1$, $f\in\mathcal{F}[A,B]$, where
\begin{equation*}\resizebox{.95\hsize}{!}{$r_{cs}=\begin{cases}r_0,&\text{for}\; A=-B=\alpha\leq\alpha_0\\
r_1:=\dfrac{-(1+(1-\alpha)h_2)+\sqrt{(1+(1-\alpha)h_2)^2+4\alpha h_2^2}}{2\alpha h_2},&\text{for}\; A=-B=\alpha>\alpha_0\\
r_2:=\dfrac{-(1+(1-\alpha)(k_1+k))+\sqrt{(1+(1-\alpha)(k_1+k))^2+4\alpha (k_1+k)^2}}{2\alpha (k_1+k)},&\text{for}\; A=\alpha e^{i \gamma}, B=\alpha e^{-i\gamma}.
\end{cases}$}\end{equation*} Moreover the radii $r_0$ and $r_2$ are sharp.
\end{theorem}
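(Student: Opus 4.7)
The structure mirrors the preceding $\mathcal{BS}^*(\alpha)$ result. By subordination, it is enough to prove that the image $q_\alpha(\overline{\mathbb{D}_r})$ of the closed disk under $q_\alpha(z):=z/((1-z)(1+\alpha z))$ lies in $\psi_{A,B}(\mathbb{D})$, because every $\phi(z):=zf'(z)/f(z)-1$ with $f\in\mathcal{S}^*_{cs}(\alpha)$ satisfies $\phi(\mathbb{D}_r)\subseteq q_\alpha(\mathbb{D}_r)$. I would split the argument into three cases according to the geometry of $\psi_{A,B}(\mathbb{D})$ recalled from Lemma~\ref{inc} and the description of $\Omega_1$.

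For the first case $A=-B=\alpha\leq\alpha_0$, the target domain is the interior of the ellipse $u^2/h_1^2+v^2/h_2^2=1$. I would start with the necessary condition: show that $\max_{|z|=r}\mathrm{Re}\,q_\alpha(z)$ is attained at $z=r$, so the real-axis bound $q_\alpha(r)\leq h_1$ must hold. Rearranging $r/((1-r)(1+\alpha r))=h_1$ into the quadratic $\alpha h_1 r^2+(1+(1-\alpha)h_1)r-h_1=0$ produces the stated formula for $r_0$. For sufficiency, I would parametrize $q_\alpha(r_0 e^{i\theta})=u(\theta)+iv(\theta)$ using $q_\alpha(z)=z\,\overline{(1-z)(1+\alpha z)}/|(1-z)(1+\alpha z)|^2$, substitute into the ellipse equation, and multiply by $|(1-z)(1+\alpha z)|^4$; this gives precisely the equation defining $\alpha_0$. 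The meaning of $\alpha\leq\alpha_0$ is then that the only contact of the curve $q_\alpha(\partial\mathbb{D}_{r_0})$ with the ellipse boundary is at $\theta=0$, so $q_\alpha(\overline{\mathbb{D}_{r_0}})\subset\overline{\Omega_1}$.

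For the remaining two cases I would instead invoke the inscribed disks from Lemma~\ref{inc}: $\{w:|w|<h_2\}\subset\Omega_1$ when $A=-B=\alpha>\alpha_0$, and $\{w:|w|<k_1+k\}\subset\Omega_1$ when $A=\alpha e^{i\gamma}$, $B=\alpha e^{-i\gamma}$. In either case it suffices that $|q_\alpha(z)|\leq R$ on $|z|\leq r$ for the relevant radius $R$. I would compute $\max_{|z|=r}|q_\alpha(z)|^2=r^2/D(\theta)$, where $D(\theta)=(1+r^2-2r\cos\theta)(1+\alpha^2 r^2+2\alpha r\cos\theta)$ is concave-down as a polynomial in $\cos\theta$, hence minimized at an endpoint; a direct comparison of $D(1)$ and $D(-1)$ shows the minimum occurs at $\theta=0$ for $0<\alpha<1$, yielding $\max_{|z|=r}|q_\alpha(z)|=r/((1-r)(1+\alpha r))$. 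Setting this equal to $R=h_2$ (resp.\ $R=k_1+k$) and solving the resulting quadratic produces $r_1$ (resp.\ $r_2$) in the form stated. Sharpness of $r_0$ and $r_2$ would be verified by taking the extremal function $f\in\mathcal{S}^*_{cs}(\alpha)$ for which $zf'(z)/f(z)-1=q_\alpha(z)$ and exhibiting a point $z_0$ on $|z|=r_0$ (resp.\ $|z|=r_2$) where $q_\alpha(z_0)$ lies on $\partial\psi_{A,B}(\mathbb{D})$.

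The main obstacle is the sufficiency step in the first case: verifying that the single-variable inequality $q_\alpha(r_0)\leq h_1$ actually implies $q_\alpha(\overline{\mathbb{D}_{r_0}})\subset\overline{\Omega_1}$, and locating the precise threshold $\alpha_0$ beyond which the curve $q_\alpha(\partial\mathbb{D}_{r_0})$ begins to escape the ellipse off the real axis. The saving feature is that, after clearing denominators, the condition for an interior tangency coincides with the defining equation of $\alpha_0$ in the statement, so the monotonicity in $\alpha$ of this configuration (for fixed $r_0$ and varying $\theta\in(0,\pi/2)$) delivers both the threshold and the correct inequality for $\alpha\leq\alpha_0$.
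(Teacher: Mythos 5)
Your proposal is correct and follows essentially the same route as the paper: the necessary condition $\max_{|z|=r}\RE q_\alpha(z)=q_\alpha(r)\leq h_1$ yielding the quadratic for $r_0$, the threshold $\alpha_0$ defined by contact of the curve $q_\alpha(\partial\mathbb{D}_{r_0})$ with the ellipse $\partial\Omega_1$ off the real axis, and the inscribed disks of Lemma~\ref{inc} together with the modulus bound $|q_\alpha(z)|\leq r/((1-r)(1+\alpha r))$ for the cases giving $r_1$ and $r_2$. The only cosmetic difference is that you substitute the parametrization directly into the ellipse equation for the sufficiency step, whereas the paper argues via the distance of $CS_{r_0}(\theta)$ from the origin; both reduce to the same defining equation for $\alpha_0$.
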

\begin{proof}
Let $f\in\mathcal{S}^*_{cs}(\alpha)$. Since for $A=-B=\alpha$, $\max_{|z|=1}\RE \psi_{A,B}(z)=h_1$. Thus to find such $r<1$ for which the image of $zf'(z)/f(z)-1$ under the disc $|z|<r$ lies inside $\psi_{A,B}(\mathbb{D})$, it is necessary that \begin{equation}\label{nec}\max_{|z|=r<1}\RE\left(\dfrac{z}{(1-z)(1+\alpha z)}\right)=\dfrac{r}{(1-r)(1+\alpha r)}\leq h_1\end{equation} must hold. Clearly for $|z|\leq r_0$, the equation (\ref{nec}) holds. Now to see that for  $\alpha\leq \alpha_0$ radius $r_0$ is also sufficient for $zf'(z)/f(z)-1\prec z/((1-z)(1+\alpha z))\in\psi_{A,B}(\mathbb{D})$ in the disc $|z|\leq r_0$. For $\zeta=r e^{i\theta}$ $(\theta\in[0,2\pi))$, we have

\begin{align*}
CS_r(\theta):=\dfrac{\zeta}{(1-\zeta)(1+\alpha\zeta)}=&\dfrac{r ((\alpha-1)r+\cos{\theta}(1-\alpha r_0^2))}{(1+r^2-2r\cos\theta)(1+\alpha^2 r^2+2\alpha r\cos{\theta})}+\\
&i\dfrac{r (1+\alpha r_0^2)\sin{\theta}}{(1+r^2-2r\cos\theta)(1+\alpha^2 r^2+2\alpha r\cos{\theta})}.
\end{align*}
Since $\RE CS_r(\theta)=\RE CS_r(-\theta)$, therefore the curve $CS_r(\theta)$ is symmetric about real axis thus it is sufficient  to consider for $\theta\in[0,\pi].$ Now for $r=r_0,$ the square of the distance from the origin to the points of $CS_{r_0}(\theta)$ is given by \begin{equation*}
Dist(0;CS_{r_0}(\theta)):=\dfrac{r_0^2}{(1+r^2-2r\cos\theta)^2(1+\alpha^2 r^2+2\alpha r\cos{\theta})^2}.
\end{equation*} Since $Dist(0;CS_{r_{0}}(\theta))'=0$ for $\theta=0,\theta_{0}$ and $\pi$ with $Dist(0;CS_{r_{0}}(\theta))'<0$ whenever $\theta\in(0,\theta_0)$ and  $Dist(0;CS_{r_{0}}(\theta))'>0$ whenever $\theta\in(\theta_0,\pi)$.  And  $Dist(0;CS_{r_0}(0))-Dist(0;CS_{r_0}(\pi))>0$, hence the farthest  point of $CS_{r_0}(\theta)$ from origin is equal to $h_{1}$ obtained at $\theta=0$. Now for $A=-B=\alpha>\alpha_0$ and $A=\alpha e^{i \gamma}$, $B=\alpha e^{-i\gamma}$ with $|z|=r$, we have $$\bigg|\dfrac{zf'(z)}{f(z)}-1\bigg|\leq\dfrac{r}{(1-r)(1+\alpha r)}.$$ Therefore, by Lemma \ref{inc}, we see that $\mathcal{F}[A,B]-$radius for the class $\mathcal{S}^*_{cs}(\alpha)$ are the smallest positive roots $r_1$ and $r_2$ of the equations $r/((1-r)(1+\alpha r))= h_2$ and $r/((1-r)(1+\alpha r))= k_1+k,$ respectively.
\end{proof}

\begin{figure}[ht]
    \centering
    \subfloat[\centering]{\includegraphics[width=40mm, height=30mm]{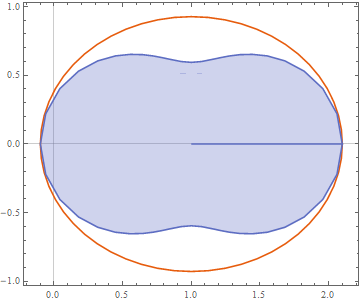}}
\qquad
   \subfloat[ \centering]{\includegraphics[width=40mm, height=30mm]{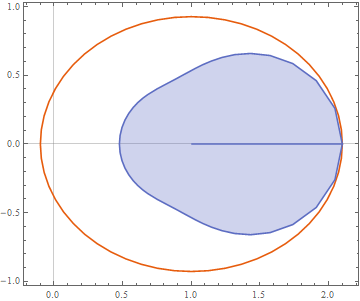}}
    \caption{$f_\alpha(z)=z/(1-\alpha z^2)$ and $g_\alpha(z)=z/((1- z)(1+\alpha z))$, (A) $f_{0.5}(\mathbb{D}_{r_0=0.77})\subset\psi_{0.5,-0.5}(\partial\mathbb{D})$ and 
    (B) $g_{0.5}(\mathbb{D}_{r_0=0.59})\subset\psi_{0.5,-0.5}(\partial\mathbb{D})$.}
    \label{CS1}
\end{figure}

\end{document}